\setlist[enumerate]{label=(\alph*)}
\renewcommand{\epsilon}{\varepsilon}
\renewcommand{\phi}{\varphi}
\renewcommand{\theta}{\vartheta}
\newcommand{\ideal}{\trianglelefteq}
\theoremstyle{plain}
\newtheorem{theorem}{Theorem}[section]
\newtheorem{lemma}[theorem]{Lemma}
\newtheorem{proposition}[theorem]{Proposition}
\newtheorem{corollary}[theorem]{Corollary}
\theoremstyle{remark}
\newtheorem{remark}[theorem]{Remark}
\newtheorem{example}[theorem]{Example}
\newcommand{\N}{\mathbb{N}}
\newcommand{\Z}{\mathbb{Z}}
\newcommand{\C}{\mathbb{C}}
\DeclareMathOperator{\rk}{rk}
\DeclareMathOperator{\GL}{GL}
\DeclareMathOperator{\SL}{SL}
\DeclareMathOperator{\Sp}{Sp}
\DeclareMathOperator{\Irr}{Irr}
\DeclareMathOperator{\co}{co}
\DeclareMathOperator{\Ind}{Ind}
\newcommand{\T}{\mathsf{T}}
\renewcommand{\O}{\mathsf{O}}
\newcommand{\I}{\mathsf{I}}
\newcommand{\OT}{\mathsf{OT}}
\renewcommand{\H}{\mathsf{H}}
\renewcommand{\c}{\mathbf{c}}
\newcommand{\h}{\mathfrak{h}}
\DeclareMathOperator{\Spec}{Spec}
\title[Symplectic linear quotients admitting a symplectic resolution]{Towards the classification of symplectic linear quotient singularities admitting a symplectic resolution}
\author{Gwyn Bellamy}
\address{School of Mathematics and Statistics, University of Glasgow, University Place, Glasgow, G12 8QQ, UK}
\email{Gwyn.Bellamy@glasgow.ac.uk}
\author{Johannes Schmitt}
\address{Fachbereich Mathematik, Technische Universität Kaiserslautern, 67663 Kaiserslautern, Germany}
\email{schmitt@mathematik.uni-kl.de}
\author{Ulrich Thiel}
\address{Fachbereich Mathematik, Technische Universität Kaiserslautern, 67663 Kaiserslautern, Germany}
\email{thiel@mathematik.uni-kl.de}
\def\blfootnote{\gdef\@thefnmark{}\@footnotetext}
\date{May 4, 2020}
\begin{document}

\blfootnote{\textsc{Gwyn Bellamy, School of Mathematics and Statistics, University of Glasgow, University Place, Glasgow, G12 8QQ, UK}, \texttt{Gwyn.Bellamy@glasgow.ac.uk}}
\blfootnote{\textsc{Johannes Schmitt, Fachbereich Mathematik, Technische Universität Kaiserslautern, 67663 Kaiserslautern, Germany}, \texttt{schmitt@mathematik.uni-kl.de}}
\blfootnote{\textsc{Ulrich Thiel, Fachbereich Mathematik, Technische Universität Kaiserslautern, 67663 Kaiserslautern, Germany}, \texttt{thiel@mathematik.uni-kl.de}}

\maketitle

\begin{abstract}
Over the past two decades, there has been much progress on the classification of symplectic linear quotient singularities $V/G$ admitting a symplectic (equivalently, crepant) resolution of singularities. The classification is almost complete but there is an infinite series of groups in dimension 4—the symplectically primitive but complex imprimitive groups—and 10 exceptional groups up to dimension 10, for which it is still open. In this paper, we treat the remaining infinite series and prove that for all but possibly $39$ cases there is no symplectic resolution. We thereby reduce the classification problem to finitely many open cases. We furthermore prove non-existence of a symplectic resolution for one exceptional group, leaving $39+9=48$ open cases in total. We do not expect any of the remaining cases to admit a symplectic resolution.
\end{abstract}

\section{Introduction to the problem and current status}
Recall that a \emph{smooth symplectic variety} is a complex algebraic variety $X$ equipped with a regular, closed, and non-degenerate $2$-form $\omega$, i.e.\ there is a ``smooth'' family of symplectic forms $\omega_x$ on the tangent spaces $T_x X$ of $X$. As the dimension of the tangent space at a singular point is greater than that at smooth points, it is not clear how to extend this concept to singular varieties—and what to gain from this.

In 2000, Beauville \cite{Beauville} proposed such an extension: a (possibly singular) \emph{symplectic variety} is a normal variety $X$ with a symplectic form $\omega$ on its smooth part $X^{\mathrm{sm}}$ such that for any resolution $\pi \colon \widetilde{X} \to X$ of singularities (i.e.\ a proper birational morphism with $\widetilde{X}$ smooth) the pullback of $\omega$ to $\pi^{-1}(X^{\mathrm{sm}})$ extends to a regular 2-form on all of $\widetilde{X}$. Since two given resolutions are dominated by a common resolution, it is enough to check this property only for one particular resolution. Singularities of a symplectic variety are called \emph{symplectic singularities}. They are rational Gorenstein \cite{Beauville}. In retrospect, this definition seems natural but Beauville was originally motivated by the analogy between rational Gorenstein singularities and Calabi--Yau manifolds. Symplectic singularities have become a very important and influential subject, not just in algebraic geometry but also in representation theory \cite{Fu-Survey,SymplecticDuality}.

If the pullback of $\omega$ to $\pi^{-1}(X^{\mathrm{sm}})$ extends not just to a regular 2-form but to a symplectic form, the resolution $\pi$ is called \emph{symplectic}. This is the kind of resolution one would like to have in this context. In light of the minimal model program \cite{BCHM}, we moreover want the resolution to be a \emph{projective} morphism. From now on, by ``resolution'' we will always mean a ``projective resolution''. The canonical class $K_X$ of a symplectic variety $X$ is trivial since it is trivialized by $\wedge^n \omega$, where $\dim X  = 2n$. Hence, if $\pi \colon \widetilde{X} \to X$ is a symplectic resolution, then $K_{\widetilde{X}}$ is trivial as well. In particular, $\pi^* K_X = K_{\widetilde{X}}$, i.e.\ $\pi$ is a \emph{crepant} resolution. Conversely, a crepant resolution of a symplectic variety is symplectic \cite{Fu-Symp-Res-Nilp}. \\

One important class of examples of symplectic varieties are the \emph{symplectic linear quotients}: quotients $V/G=\Spec \C\lbrack V \rbrack^G$ for $V$ a finite-dimensional symplectic complex vector space and $G < \mathrm{Sp}(V)$ a finite group of symplectic automorphisms of $V$. Here, the symplectic form on the smooth part of $V/G$ is induced by the symplectic form on $V$; see \cite{Beauville}. Note that we always have $\Sp(V) \leq \SL(V)$ and that there is equality for $V=\C^2$, so 2-dimensional symplectic linear quotients are precisely the \emph{Kleinian singularities}. It is known that Kleinian singularities admit a unique minimal resolution, and the minimal resolution is crepant (thus symplectic). In higher dimensions, the situation is much more difficult and interesting. \\

\noindent\textbf{Problem.} Which symplectic linear quotients $V/G$ admit a symplectic resolution? \\

There has been much progress on this problem over the past two decades—more on this below. The classification is almost complete but there is an infinite series of groups and 10 exceptional groups for which it is still open. In this paper, we treat the infinite series (and one exceptional group) and reduce the classification problem to finitely many open cases. The first major step in the classification is due to Verbitsky \cite{Verbitsky}.

\begin{theorem}[Verbitsky]
  If $V/G$ admits a symplectic resolution (not necessarily projective), then the finite group $G$ is generated by \emph{symplectic reflections}, i.e.\ by elements $s \in G$ whose fixed space is of codimension 2 in~$V$.
\end{theorem}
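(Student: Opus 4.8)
The plan is to argue by contradiction, organising everything around the normal subgroup generated by the symplectic reflections and reducing to a ``reflection-free'' situation in which the singularity becomes terminal. Let $H \ideal G$ be the subgroup generated by all symplectic reflections in $G$; it is normal because the set of symplectic reflections is stable under conjugation. I want to show $H = G$, so suppose $H \neq G$ and fix $g \in G \setminus H$; no such $g$ is a symplectic reflection, so $\operatorname{codim}_V V^g \geq 4$. I would induct on $\dim V$. The engine of the whole argument is the following claim: if $\Gamma < \Sp(W)$ is a nontrivial finite group containing \emph{no} symplectic reflections, then $W/\Gamma$ admits no symplectic resolution. Granting this, I reduce the general case to it via transverse symplectic slices. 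Taking $v$ generic in $V^g$ (so $v \neq 0$ and $g \in \Gamma := G_v$), the quotient $V/G$ is étale-locally a product of a smooth factor with the symplectic slice quotient $W/\Gamma$, where $W = (V^{\Gamma})^{\perp}$ is a symplectic space with $\dim W = \operatorname{codim}_V V^{\Gamma} < \dim V$. A symplectic resolution of $V/G$ restricts to one of $W/\Gamma$; the symplectic reflections of $\Gamma$ acting on $W$ are precisely the symplectic reflections of $G$ lying in $\Gamma$ (hence in $H$), so the reflection subgroup of $\Gamma$ is contained in $H$ and $g \in \Gamma$ still does not lie in it. Thus $\Gamma$ is not generated by its symplectic reflections, and since $\dim W < \dim V$ the induction hypothesis produces a contradiction.

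It remains to prove the engine claim, which is where the symplectic geometry enters. Let $\Gamma < \Sp(W)$ be nontrivial with no symplectic reflections. Since $\Gamma \subseteq \SL(W)$ it has no pseudo-reflections, so $X := W/\Gamma$ is smooth in codimension $1$; moreover a codimension-$2$ singular stratum of $X$ would have Kleinian transverse slice $\C^2/\Gamma'$ with $\Gamma' \neq 1$, forcing an element of $\Gamma$ that fixes a codimension-$2$ subspace of $W$, i.e.\ a symplectic reflection. Hence $\operatorname{Sing}(X)$ has codimension $\geq 4$, and by Namikawa's criterion a symplectic singularity whose singular locus has codimension $\geq 4$ is terminal, so $X$ is terminal. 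On the other hand $X$ is a finite quotient of the smooth (hence factorial) variety $W$, so $X$ is $\mathbb{Q}$-factorial. Suppose now $\pi \colon Y \to X$ were a symplectic resolution; by the discussion in the excerpt it is crepant. Terminality forces $\pi$ to be \emph{small}, an isomorphism in codimension $1$, since any $\pi$-exceptional divisor would carry strictly positive discrepancy; but over a $\mathbb{Q}$-factorial base a small proper birational morphism from a normal variety is an isomorphism (push a divisor forward, make it $\mathbb{Q}$-Cartier, and intersect with a would-be contracted curve). Hence $X$ is smooth, contradicting $\Gamma \neq 1$.

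The main obstacle, and the step requiring the most care, is pushing the reduction of the first paragraph through uniformly: one must check the étale-local product structure at the slice, confirm that the induced map on $W/\Gamma$ is genuinely a symplectic resolution, and track the reflections so that $g$ survives as a non-reflection in the smaller group $\Gamma$. The genuinely delicate case is the degenerate one, where $H \neq 1$ but every element of $G \setminus H$ is fixed-point-free, so that no slice of strictly smaller dimension is available to feed the induction; there the interaction between the codimension-$2$ strata coming from $H$ and the deeper strata coming from $G/H$ must be disentangled directly. This is precisely the point at which Verbitsky's original argument—establishing that every symplectic resolution is \emph{semismall}, its fibres being isotropic for the symplectic form, and reading off the codimension constraints from that—is the robust tool; the terminality argument above is a cleaner repackaging of the same isotropy phenomenon, and the two geometric inputs to the engine claim (Namikawa's terminality criterion and the $\mathbb{Q}$-factoriality obstruction to small modifications) should be cited precisely.
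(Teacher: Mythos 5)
The paper does not prove this statement at all---it is quoted as Verbitsky's theorem and cited to \cite{Verbitsky}---so your proposal has to stand on its own as a proof, and it does not quite. The positive part first: your ``engine'' claim is correct and cleanly argued. For $\Gamma<\Sp(W)$ nontrivial with no symplectic reflections, the quotient $W/\Gamma$ is a symplectic singularity whose singular locus has codimension at least $4$, hence is terminal by Namikawa's criterion; it is $\mathbb{Q}$-factorial because the local class groups of quotient singularities are finite; a crepant resolution of a terminal variety has no exceptional divisor and so is small; and a small proper birational morphism onto a $\mathbb{Q}$-factorial normal variety is an isomorphism. Likewise the reduction via Kaledin's slice theorem (\cite[Theorem 1.6]{Kaledin03}, which the paper itself invokes for $W(S_2)$) is legitimate whenever it applies: if some $g\in G\setminus H$ has $V^g\neq 0$, a generic $v\in V^g$ gives a stabiliser $\Gamma=G_v\ni g$ acting on a strictly smaller symplectic slice $W$ whose symplectic reflections all lie in $H$, and the induction runs.

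The genuine gap is exactly the case you flag and then defer: $1\neq H\subsetneq G$ with $V^g=0$ for \emph{every} $g\in G\setminus H$. This case is not vacuous. Take $V=\C^4$ with the form pairing coordinates $(1,2)$ and $(3,4)$, let $H=\langle \operatorname{diag}(\omega,\omega^{-1},1,1),\ \operatorname{diag}(1,1,\omega,\omega^{-1})\rangle\cong(\Z/3)^2$ (generated by symplectic reflections), and $G=H\times\langle -I_4\rangle$. Every element of $G\setminus H$ has eigenvalues of the form $-\omega^{\pm a},-\omega^{\pm b}$, none equal to $1$, so it fixes only the origin; every nonzero point has stabiliser inside $H$, every proper slice quotient is a product of Kleinian singularities (which \emph{do} admit symplectic resolutions), and your induction produces no contradiction. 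Deciding this case is the actual content of Verbitsky's theorem, and it requires a different input---his semismallness/isotropic-fibre analysis, or equivalently a McKay-type count of crepant divisors combined with a fundamental-group or purity argument comparing $V/H\to V/G$ over the codimension-$2$ strata. Gesturing at ``semismallness as the robust tool'' without carrying out that step leaves the proof incomplete precisely where it is hardest. (A secondary, fixable point: the theorem allows non-projective resolutions, so when you conclude that a small morphism over a $\mathbb{Q}$-factorial base is an isomorphism you should check you still have enough divisors on $Y$ to intersect with a contracted curve; smoothness of $Y$ suffices, but say so.)
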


Groups generated by symplectic reflections are called \emph{symplectic reflection groups}. Note that despite the terminology, a symplectic reflection group is not just a group $G$ but a pair $(V,G)$. It is clear that for the main problem we need to consider pairs $(V,G)$ only up to change of basis. Symplectic reflection groups up to conjugacy have been classified by Cohen \cite{Cohen80} in 1980. In dimension 2 the symplectic linear quotients are just the Kleinian singularities and all of them admit a symplectic resolution. So, we assume from now on we are in dimension $\geq 4$. It is sufficient to only consider \emph{symplectically irreducible} pairs $(V,G)$, i.e.\ pairs for which there is no proper non-zero symplectic subspace of $V$ invariant under $G$, since any pair is a direct sum of symplectically irreducible pairs. The irreducible ones split into four classes as illustrated in Figure \ref{fig:symptypes}.
\begin{figure}[htbp]
  \begin{tikzpicture}[scale=0.8]
    \node[align=center] at (0, 0) (root) {Symplectic\\ reflection groups};
    \node[align=center] at (-3.5, -2) (improper) {Improper\\(complex\\ reflection\\ groups)} edge[bend left] (root);
    \node[align=center] at (3.5, -2) (proper) {Proper} edge[bend right] (root);

    \node[align=center] at (6.5, -4) (primitive) {Symplectically\\ primitive} edge[bend right] (proper);
    \node[align=center] at (0.5, -4) (imprimitive) {Symplectically\\ imprimitive\\\cite[Thm.\ 2.6, 2.9]{Cohen80}} edge[bend left] (proper);

    \node[align=center] at (8.5, -6) {Complex\\ primitive\\ \cite[Thm.\ 4.2]{Cohen80}} edge[bend right] (primitive);
    \node[align=center] at (4.5, -6) {\textbf{Complex}\\ \textbf{imprimitive}\\ \cite[Thm.\ 3.6]{Cohen80}} edge[bend left] (primitive);

  \end{tikzpicture}
  \caption{\label{fig:symptypes}The different classes of symplectic reflection groups in dimension \(\geq 4\) in Cohen's classification.}
\end{figure}
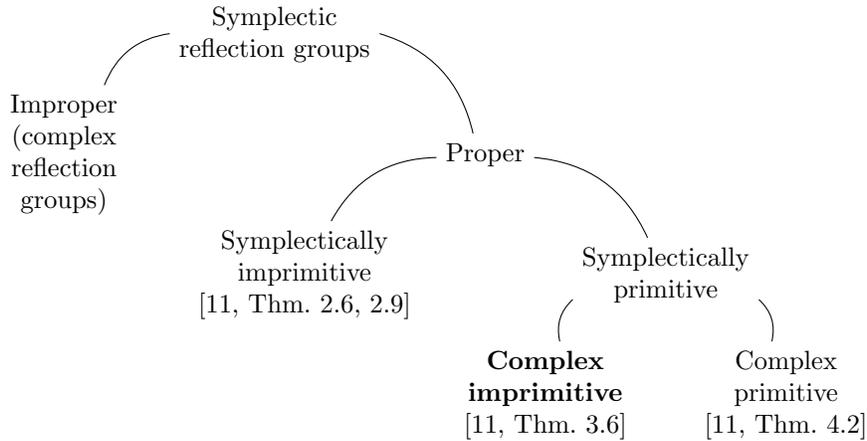

If \(G\) preserves a Lagrangian subspace \(L\subseteq V\), we say that \(G\) is \emph{improper}. In this case, \(G\leq \GL(L)\) is a complex reflection group and \(V\cong L\oplus L^\ast\) as a \(G\)-module. Complex reflection groups up to conjugacy were classified by Shephard and Todd \cite{ShephardTodd54}.
In work of Etingof--Ginzburg \cite{EtingofGinzburg02}, Gordon \cite{Gordon-Baby}, and Bellamy \cite{Bellamy09} it is proven that in this case $V/G$ admits a symplectic resolution if and only if $G$ is the group $G(m,1,n) = C_m \wr S_n$ or the exceptional group $G_4$ in the Shephard--Todd notation. All symplectic resolutions up to isomorphism were explicitly constructed for $G(m,1,n)$ by Bellamy–Craw \cite{Bellamy-Craw} and for $G_4$ by Lehn–Sorger \cite{Lehn-Sorger}.

We call a proper (i.e.\ not improper) group \(G\) \emph{symplectically imprimitive}, if there exists a non-trivial decomposition \(V = V_1\oplus \cdots \oplus V_k\) into symplectic subspaces such that, for all \(g\in G\) and all \(i\), there exists \(j\) such that \(g(V_i) = V_j\).
These groups split into infinite families given in \cite[Theorem 2.6]{Cohen80} (in dimension 4) and \cite[Theorem 2.9]{Cohen80} (in dimension greater than 4).
Linear quotients of these groups are treated in \cite{BellamySchedler16}, where the above question is answered for almost all cases.
The remaining ones are covered by \cite{Yamagishi18}.
The only groups in this class for which the linear quotient admits a symplectic resolution are the groups \(K\wr S_n\) with a finite group \(K\leq \SL_2(\C)\) and the group \(Q_8\times_{\Z/2\Z} D_8\) considered in \cite{BellamySchedler13}. All resolutions in the latter case were explicitly constructed by Donten-Bury--Wi\'{s}niewski~\cite{81resolutions}.

This leaves only the proper groups \(G\) which are \emph{symplectically primitive}, that is, groups for which no decomposition as above exists.
We still may have a decomposition into non-symplectic subspaces, so those groups may be \emph{complex primitive} or \emph{complex imprimitive}.
The complex primitive groups are given in \cite[Table III]{Cohen80}.
In \cite{BellamySchedler16}, the authors prove that for three of them (\(W(Q)\), \(W(S_3)\), \(W(T)\)) no symplectic resolution exists.
Using the same strategy, we prove in Section \ref{sec:onemoregroup}:

\begin{theorem}
The symplectic linear quotient associated to the symplectic reflection group \(W(S_2)\) does not admit a symplectic resolution.
\end{theorem}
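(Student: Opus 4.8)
The plan is to follow the criterion developed in \cite{BellamySchedler16}, which the authors explicitly say they will reuse. The key fact behind that strategy is a theorem (due to Ginzburg--Kaledin and Namikawa, as used by Bellamy--Schedler) translating the geometric question of existence of a symplectic resolution into a representation-theoretic/combinatorial one: $V/G$ admits a symplectic resolution if and only if the number of symplectic reflection conjugacy classes in $G$ equals the number of certain symplectic leaves, equivalently if and only if every symplectic reflection $s \in G$ has the property that the ``local'' singularity along the fixed locus $V^s$ admits a symplectic resolution and these glue appropriately. In practice, Bellamy--Schedler reduce non-existence to checking a numerical condition on the Calogero--Moser space / the structure of the symplectic reflections, so first I would recall that precise criterion and isolate exactly which numerical invariant of $W(S_2)$ must be computed.

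\emph{Setting up the group.} First I would extract $W(S_2)$ from \cite[Table III]{Cohen80} as an explicit finite subgroup of $\Sp(V)$ with $\dim V$ equal to its listed value, together with generators. The essential data to assemble are: the conjugacy classes of symplectic reflections in $W(S_2)$ (elements fixing a codimension-$2$ subspace), the pointwise stabilizers of the associated fixed planes, and the induced action on the normal space to each fixed locus. This is a finite computation in principle, and I would carry it out using the character table and the explicit matrix generators, most plausibly with computer algebra, since the analogous cases $W(Q)$, $W(S_3)$, $W(T)$ in \cite{BellamySchedler16} were handled that way.

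\emph{Applying the obstruction.} With the symplectic reflection data in hand, I would apply the Bellamy--Schedler obstruction: for each conjugacy class of symplectic reflections one examines the associated rank-$2$ symplectic quotient singularity transverse to the fixed locus; a necessary condition for $V/G$ to admit a symplectic resolution is that these transverse singularities, together with the global stabilizer structure, are compatible with crepant resolvability. Concretely, this reduces to verifying that a particular count — the dimension of the degree-two part of the Calogero--Moser deformation, or equivalently the failure of the ``$\#\{\text{symplectic leaves}\}= \#\{\text{classes of symplectic reflections}\}+1$'' type equality — comes out wrong. I would compute this invariant for $W(S_2)$ and show it violates the necessary condition, exactly mirroring the three cases already settled in \cite{BellamySchedler16}.

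\emph{Main obstacle.} The conceptual steps are identical to the established $W(Q), W(S_3), W(T)$ arguments, so the hard part will not be the strategy but the explicit computation inside the primitive group $W(S_2)$: correctly identifying all conjugacy classes of symplectic reflections and their transverse local models, and then evaluating the numerical obstruction without arithmetic error. Because $W(S_2)$ is a large primitive symplectic reflection group, this is a substantial and error-prone machine computation, and the real content of the proof is verifying that the obstruction of \cite{BellamySchedler16} is nonzero (equivalently, that the relevant equality fails) in this remaining case.
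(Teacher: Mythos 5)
Your overall instinct---reduce to a transverse-slice computation and invoke the local-to-global result of Kaledin, exactly as in the $W(Q)$, $W(S_3)$, $W(T)$ cases of \cite{BellamySchedler16}---is the right one, but the specific obstruction you propose to evaluate can never produce a contradiction. You restrict attention to the symplectic reflections of $W(S_2)$ and to the rank-$2$ symplectic quotient singularities transverse to their fixed loci; those transverse models are $2$-dimensional, hence Kleinian, and every Kleinian singularity admits a symplectic (crepant, minimal) resolution, so no obstruction lives there. Likewise, the ``counting'' criterion you gesture at (number of conjugacy classes of symplectic reflections versus number of symplectic leaves) is not the criterion used in \cite{BellamySchedler16} and is not stated in a form one could actually check. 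The genuine content of the proof is the choice of a vector stabilizer whose fixed space has \emph{large} codimension and whose slice representation is a known non-resolvable case; your proposal never identifies such a subgroup.

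Concretely, the paper takes $v=(0,0,1,0,0,0,0,-1)^\top\in\C^8$ and computes (by machine) its stabilizer $H\leq W(S_2)$. The fixed space $(\C^8)^H$ is $2$-dimensional, and on its $6$-dimensional $H$-invariant complement $W$ the group $H$ preserves a Lagrangian subspace, so it acts there as an improper symplectic reflection group, which turns out to be the complex reflection group $G(4,4,3)$ acting on $L\oplus L^\ast$. By the classification of the improper case \cite{Bellamy09}, the quotient $W/H$ admits no symplectic resolution, and \cite[Theorem 1.6]{Kaledin03} then forces the same conclusion for $\C^8/W(S_2)$. This is why the method applies to $W(S_2)$ (which lives in dimension $8$) but, as the paper's closing remark notes, fails for the $4$-dimensional groups: there every proper vector stabilizer yields a $2$-dimensional, hence always resolvable, slice---which is precisely the trap your proposed computation falls into.
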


This leaves the 9 groups coming from the root systems \(O_1\), \(O_2\), \(O_3\), \(P_1\), \(P_2\), \(P_3\), \(R\), \(S_1\), \(U\) in \cite[Table III]{Cohen80}, for which the problem is still open. \\

Apart from Section \ref{sec:onemoregroup} we consider in this paper the last class of groups, namely the symplectically primitive but complex imprimitive ones as given in \cite[Theorem 3.6]{Cohen80}. This is an infinite class of groups in dimension 4. The main result of this paper is:

\begin{theorem}
  For all but possibly finitely many of the symplectic reflection groups $(V,G)$ which are symplectically primitive but complex imprimitive the associated symplectic linear quotient $V/G$ does not admit a symplectic resolution.
\end{theorem}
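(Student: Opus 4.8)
The plan is to reduce the existence of a symplectic resolution to a computable property of the symplectic reflection algebra, and then to show that this property fails for all but finitely many groups in the infinite family of \cite[Theorem 3.6]{Cohen80}. First I would invoke the chain of reductions that is by now standard for conical symplectic singularities: by \cite{BCHM} the quotient $V/G$ admits a $\mathbb{Q}$-factorial terminalization $Y \to V/G$, and by the work of Namikawa together with Ginzburg--Kaledin a symplectic resolution exists if and only if $Y$ is already smooth, equivalently if and only if the generic fibre of the universal Poisson deformation of $V/G$ is smooth. This generic fibre is the generic Calogero--Moser space $\Spec Z(\H_{0,\c})$, the spectrum of the centre of the symplectic reflection algebra $\H_{0,\c}$ at $t=0$ and generic parameter $\c$. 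Since $\dim V = 4$, the only strata of $V/G$ that can obstruct smoothness beyond the codimension-two locus are concentrated at the origin: along the codimension-two strata the transverse slices are Kleinian singularities, which always admit crepant resolutions, so the whole question is controlled by the behaviour of $\H_{0,\c}$ over the central fibre.

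Next I would use the representation-theoretic detection of smoothness. The generic Calogero--Moser space is smooth precisely when the Azumaya locus of $\H_{0,\c}$ is everything, and failure of this is visible over the origin through the block structure of the restricted algebra $\overline{\H}_{\c}$, i.e.\ through the \emph{Calogero--Moser families}: the partition of $\Irr(G)$ recording which simple $G$-modules are linked in $\overline{\H}_{\c}$. In analogy with the criterion used for the complex primitive cases in \cite{BellamySchedler16} (and for $W(S_2)$ above), a symplectic resolution can exist only if, for generic $\c$, every Calogero--Moser family is a singleton. The strategy thus becomes concrete: compute these generic families for each group in the infinite series and exhibit, for all but finitely many, a family of size at least two.

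To carry this out uniformly I would first extract from \cite[Theorem 3.6]{Cohen80} an explicit model of the whole series — these groups sit inside $\Sp(V)$ with $V \cong \C^2 \oplus \C^2$, acting imprimitively on the two factors while remaining symplectically primitive, and they are indexed by discrete data (in particular an integer $m$ governing a cyclic part). For each such group I would (i) determine the conjugacy classes of symplectic reflections, which by Namikawa's theory equals $\dim H^2(Y,\C)$ and hence the number of exceptional divisors a resolution would carry, and (ii) compute the generic families, either directly via the central characters of the Euler element together with the finer linking data, or via the numerical invariants these families must satisfy. The aim is to express the relevant invariant as an explicit function of $m$ and to compare it with what smoothness forces; the expectation is that one side grows with $m$ while the constraint imposed by a smooth generic fibre does not, so the families cannot all be singletons once $m$ is large, ruling out a resolution outside a bounded range of parameters.

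The hard part will be step (ii): the groups in this series are not wreath products and have no combinatorial model for their Cherednik-type representation theory, so the Calogero--Moser families — or whichever numerical obstruction one isolates — must be computed quite explicitly, presumably with the assistance of computer algebra for small parameters together with a separate structural argument to make the conclusion uniform in $m$. Turning the finitely many ``large $m$'' exceptions and the genuinely exceptional small cases into a single clean bound, and verifying that the remaining finite list is exactly the set that cannot yet be decided, is where the real work lies.
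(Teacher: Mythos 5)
Your general framework---reducing the existence of a symplectic resolution to a representation-theoretic statement about the symplectic reflection algebra $\H_\c(V,G)$ at generic parameters, in the spirit of Etingof--Ginzburg, Ginzburg--Kaledin and Namikawa---is the same one the paper uses (the paper phrases the obstruction as the existence, for every parameter $\c$, of a simple $\H_\c$-module of dimension strictly less than $|G|$, rather than via non-singleton Calogero--Moser families, but these are two faces of the same criterion). The problem is that your proposal stops exactly where the actual proof begins: you defer ``step (ii)'' to a case-by-case computation plus an unspecified ``structural argument to make the conclusion uniform in $m$'', and it is precisely that structural argument which is the mathematical content of the theorem. A direct computation of families for each group in the series cannot by itself prove a statement about all but finitely many members of an infinite family.

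The mechanism the paper uses, and which is absent from your proposal, is the following. Each group in the series has the form $E(G)$ for $G = \mu_d G_0 \leq \GL_2(\C)$ with $G_0$ one of \emph{finitely many} primitive complex reflection groups $G_4$--$G_{22}$; the symplectic reflections of $E(G)$ split into two $E(G)$-stable classes, those lying in $G_0^\vee$ and those lying in a normal dihedral subgroup $D_d$, so the parameter splits as $\c = \c_1 + \c_2$. One takes the simple head $L(\lambda|_{G_0})$ of a baby Verma module for $\overline{\H}_{\c_1}(G_0)$, extends it to $\H_{\c_1}(G)$ using the cyclicity of $G/G_0$, and induces to $\H_{\c_1}(E(G))$. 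The induced module deforms to a module over the full algebra $\H_\c(E(G))$ if and only if all $D_d$-constituents are $\c_2$-\emph{rigid} in the sense of Bellamy--Thiel, and rigidity is read off from which characters $\chi_i$ of $Z(G)=\mu_d$ occur in the restriction. Because $L(\lambda|_{G_0})$ is a quotient of $\Delta(\lambda|_{G_0}) \cong \C[\h]^{\co G_0}\otimes \lambda|_{G_0}$ and the coinvariant algebra vanishes above degree $N = |\mathcal{S}(G_0^\vee)|$, the occurring $\chi_i$ are confined to a window of length $N+1$ that is \emph{independent of $d$}; choosing $\lambda$ above a suitable central character places this window away from the finitely many non-rigid characters as soon as $d > 2N+6$ (resp.\ $d > N+3$). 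Since $\dim L(\lambda|_{G_0}) \leq |G_0| < |G|$, this produces the required small simple module for every $\c$ once $d$ is large, which is what reduces the infinite family to finitely many open cases. Without some device of this kind---a bound uniform in $d$ coming from the fixed finite list of possible $G_0$'s---your proposed computation cannot close the argument.
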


The cases not covered by our theorem are explicit. By theoretical arguments, we reduce it from infinitely many to 73 open cases, see Theorem \ref{thm:nores} and Table \ref{tab:open}. Using computer calculations with the software package \textsc{Champ} \cite{Thiel15} developed by the third author, we further reduce this to 39 open cases, see Table \ref{tab:open2}.

We have thereby reduced the classification problem to finitely many (precisely, $39+9=48$) open cases. We expect none of them admits a symplectic resolution but we currently cannot prove this.\\


%
%
%

We want to mention one key tool that was used to prove non-existence of a symplectic resolution in many cases and that we will use as well: the \emph{symplectic reflection algebras} $\H_\c(V,G)$ associated to $(V,G)$ by Etingof and Ginzburg \cite{EtingofGinzburg02}. These non-commutative algebras form a flat family of deformations of the ``skew coordinate ring'' $\C \lbrack V \rbrack \rtimes G$ of $V/G$. Their centres yield a flat family of deformations of the coordinate ring $\C \lbrack V \rbrack^G$ of $V/G$. The parameter space for the $\c$ is the space $\C^{\mathcal{S}(G)/G}$, where $\mathcal{S}(G)$ is the set of symplectic reflections in $G$ and $\mathcal{S}(G)/G$ denotes the set of \(G\)-conjugacy classes of symplectic reflections. The algebra $\H_\c(V,G)$ is finite over its centre, so all its irreducible modules are finite-dimensional. In fact, their dimension is bounded above by the order of $G$, see \cite{EtingofGinzburg02}. The following theorem is a combination of theorems by Etingof–Ginzburg \cite{EtingofGinzburg02} and by Ginzburg–Kaledin \cite{Ginzburg-Kaledin}.

\begin{theorem}[Etingof--Ginzburg, Ginzburg--Kaledin]
  \label{thm:EtingofGinzburgKaledin}
 If $V/G$ admits a symplectic resolution, then there is a parameter $\c$ such that the dimension of all irreducible $\H_\c(V,G)$-modules is equal to the order of $G$.
\end{theorem}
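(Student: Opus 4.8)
The plan is to organise everything around the \emph{Calogero--Moser space} $X_\c := \Spec Z_\c$, where $Z_\c$ denotes the centre of $\H_\c(V,G)$, and to split the argument into a purely representation-theoretic reformulation (the Etingof--Ginzburg input) and a deformation-theoretic existence statement (the Ginzburg--Kaledin input). First I would reduce the conclusion to a geometric one. Since $\H_\c$ is module-finite over $Z_\c$, every irreducible $\H_\c$-module admits a central character and is therefore a module over a single fibre $\H_\c \otimes_{Z_\c} \kappa(\mathfrak{m})$, a finite-dimensional algebra attached to a point $\mathfrak{m} \in X_\c$. The PI-degree of $\H_\c$ is $|G|$: at $\c = 0$ we have $\H_0 = \C[V] \rtimes G$, whose fibre over a free $G$-orbit is the full matrix algebra $M_{|G|}(\C)$, and flatness of the family $\{\H_\c\}$ over $\C^{\mathcal{S}(G)/G}$ forces the generic fibre to remain a matrix algebra of this size. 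Hence $\dim L \le |G|$ for every irreducible module $L$, with equality precisely when the supporting point $\mathfrak{m}$ is an Azumaya point. Consequently, the assertion ``all irreducible $\H_\c$-modules have dimension $|G|$'' is equivalent to ``$X_\c$ lies entirely in the Azumaya locus of $\H_\c$''.

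Next I would bring in the structural theorem of Etingof--Ginzburg, which identifies the Azumaya locus of $\H_\c$ over $X_\c$ with the smooth locus of $X_\c$. Under this identification the reformulated conclusion becomes simply: \emph{$X_\c$ is a smooth variety}. So the entire theorem reduces to exhibiting a single parameter $\c$ for which the Calogero--Moser space $X_\c$ is smooth, and it is this geometric statement that the Ginzburg--Kaledin results are designed to supply.

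For that step I would use that $X_\c$ carries a natural Poisson bracket, obtained from the commutator in the flat family $\H_\c$, making $\{X_\c\}_\c$ a family of Poisson deformations of $X_0 = V/G$ over the base $\C^{\mathcal{S}(G)/G}$, compatible with the Poisson structure induced by the symplectic form on $V$. Ginzburg--Kaledin show that this Calogero--Moser family realises the generic Poisson deformation of the conical symplectic singularity $V/G$, the base $\C^{\mathcal{S}(G)/G}$ mapping onto the base of the universal Poisson deformation so that generic $\c$ corresponds to a generic deformation; their main theorem then states that $V/G$ admits a symplectic resolution if and only if its generic Poisson deformation is smooth. Granting a symplectic resolution, the generic $X_\c$ is therefore smooth. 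Since smoothness of $X_\c$ is a Zariski-open condition on $\c$, the locus of admissible parameters is a nonempty open subset of $\C^{\mathcal{S}(G)/G}$; choosing any $\c$ in it and combining with the first two paragraphs shows that all irreducible $\H_\c$-modules have dimension $|G|$, as required.

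The \textbf{main obstacle} is the Ginzburg--Kaledin step, and specifically the claim that the Calogero--Moser family is large enough to meet the generic Poisson deformation rather than lying in some special subfamily that could miss the smooth locus. Establishing this requires identifying the symplectic-reflection-algebra deformation with (a finite cover of) the universal Poisson deformation — via a Hochschild-cohomology computation showing the deformation is versal — together with Namikawa's description of the base of the universal Poisson deformation as $H^2$ of a resolution and his criterion for smoothness of the generic fibre. The Etingof--Ginzburg identification of the Azumaya locus with the smooth locus is also substantial, but it is by now standard; the matching of the deformation bases is where I expect the genuine work to lie.
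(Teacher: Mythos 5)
The paper does not actually prove this theorem---it is quoted as a combination of results from Etingof--Ginzburg and Ginzburg--Kaledin---so there is no internal proof to compare against; your outline is precisely the standard argument from those references (irreducible modules live on fibres over points of $\Spec Z_\c$ with dimension bounded by the PI degree $|G|$, the Azumaya locus coincides with the smooth locus so the conclusion is equivalent to smoothness of the Calogero--Moser space, and a symplectic resolution forces the generic such space to be smooth via the identification of the family with the universal Poisson deformation up to finite base change) and is essentially correct. The only slip is attributional: Ginzburg--Kaledin prove the implication you actually need (resolution $\Rightarrow$ smooth generic deformation), while the converse of the equivalence you ascribe to them is due to Namikawa, as the paper itself notes immediately after the theorem statement.
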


In fact, the converse holds as well \cite{Namikawa-Poisson-Def} but most relevant for us is the negation of the above theorem: if for all $\c$ there is an irreducible $\H_\c(V,G)$-module of dimension less than the order of $G$, then $V/G$ does not admit a symplectic resolution. This is the strategy we will pursue in this paper. A key concept will be that of \emph{rigid} representations introduced by the first and third authors in \cite{BellamyThiel16}. Before we come to this, we first need to collect and prove several properties about the reflection groups in question.

\begin{remark}
As noted by the referee, choosing a suitable normal subgroup $K$ of $G$ may allow one to apply the geometric techniques of \cite{BellamySchedler16} in order to show that some of the remaining open cases of symplectically primitive but complex imprimitive groups do not give rise to quotient singularities admitting a symplectic resolution. Specifically, if there exists a symplectic resolution $Y \to V/K$ such that the action of $G/K$ on $V/K$ lifts to $Y$, then the existence of particularly bad singularities on $Y/(G/K)$ implies that $V/G$ cannot admit a symplectic resolution. However, we do not expect that the representation-theoretic techniques employed in this article (together with the computational data from \cite{Thiel15}) can take us any further in deciding if the remaining cases admit a symplectic resolution, unless new ideas are introduced.
\end{remark}

\subsection*{Acknowledgements}
We would like to thank the referee for a very detailed reading of the paper and several useful comments. Moreover, we would like to thank Gunter Malle for comments. This is a contribution to Project-ID 286237555 -- TRR 195 -- by the Deutsche Forschungsgemeinschaft (DFG, German Research Foundation).

\section{Primitive complex reflection groups}

In Cohen's classification \cite{Cohen80}, symplectically primitive reflection groups come as complexification of primitive quaternionic groups and may be complex imprimitive or primitive.
Here, we consider the first case.
These are given by four infinite families of groups all acting on \(\C^4\) by \cite[Theorem 3.6]{Cohen80}.
Each of them is constructed using an infinite family of subgroups of \(\GL_2(\C)\).
We will first describe these groups in more detail before we move on to the construction of the symplectic groups in the next section.

For any \(d\in \Z_{\geq 1}\) let
\[\mu_d := \left\langle\begin{pmatrix} \zeta_d & 0\\ 0 & \zeta_d\end{pmatrix}\right\rangle,\]
where \(\zeta_d\in\C\) is a primitive \(d\)-th root of unity.
Let \(\T\), \(\O\) and \(\I\) be the binary tetrahedral, binary octahedral and binary icosahedral group respectively, which are subgroups of \(\SL_2(\C)\).
Of course, these are only defined up to conjugacy, but there is no need to fix a representative for what follows.
See \cite[393]{Cohen76} for such an explicit description.

We have \(\T\ideal \O\) with \(\O/\T \cong C_2\), so \(\O = \langle \T, \omega \rangle\) for some \(\omega\in \O\).
We follow \cite[392]{Cohen76} to construct a further group \(\OT_d\) for any \(d\in \Z_{\geq 1}\) (or \((\mu_{2d}\mid\mu_d;\O\mid \T)\) in Cohen's notation).
For \(d\in \Z_{\geq 1}\) let \[\phi : \mu_{2d}/\mu_d \to \O/\T\] be the isomorphism defined by \(\phi\big(\overline{\zeta_{2d}I_2}\big) = \overline{\omega}\).
Set \[\mu_{2d}\times_{\phi}\O = \{(z, g)\in \mu_{2d}\times \O\mid\phi(z\mu_d) = g\T\},\] and let \(\OT_d\) denote the image of \(\mu_{2d}\times_{\phi}\O\) in \(\GL_2(\C)\) under the natural multiplication map.
That means, we have \[\OT_d = \bigcup_{\substack{k = 0\\k\text{ even}}}^{2d - 1} \zeta_{2d}^k\T \cup \bigcup_{\substack{k = 1\\k\text{ odd}}}^{2d - 1} \zeta_{2d}^k\omega\T.\]

The infinite families of subgroups of \(\GL_2(\C)\) used to constructed the symplectic reflection groups in the next section are the following:
\begin{enumerate}[(1)]
  \item\label{groups1} \(\mu_d\T\), with \(d\) a multiple of 6,
  \item\label{groups2} \(\mu_d\O\), with \(d\) a multiple of 4,
  \item\label{groups3} \(\mu_d\I\), with \(d\) a multiple of 4, 6, or 10,
  \item\label{groups4} \(\OT_{2d}\), with \(d\) an odd multiple of 1 or 2 (i.e.\ \(d\) not divisible by 4).
\end{enumerate}

\begin{lemma}
  \label{lem:centres}
  We have \(Z(\mu_d\T) = Z(\mu_d\O) = Z(\mu_d\I) = Z(\OT_d) = \mu_d\), for all even \(d\in\Z_{\geq 1}\).
\end{lemma}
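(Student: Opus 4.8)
The plan is to establish the two inclusions separately, treating all four groups $G \in \{\mu_d\T, \mu_d\O, \mu_d\I, \OT_d\}$ in a uniform way. The inclusion $\mu_d \subseteq Z(G)$ is immediate: $\mu_d$ consists of scalar matrices, which lie in the centre of $\GL_2(\C)$ and hence commute with everything in $G$. One only needs to observe that $\mu_d$ really is a subgroup of $G$ in each case; this is clear by construction for $\mu_d\T, \mu_d\O, \mu_d\I$, and for $\OT_d$ it follows because the even-index terms $\zeta_{2d}^{2m}\T = \zeta_d^m\T$ already contain all of $\mu_d$ (take the identity element of $\T$).

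For the reverse inclusion $Z(G) \subseteq \mu_d$, the key observation is that each of the four groups acts irreducibly on $\C^2$. Indeed every one of them contains one of the binary polyhedral groups $\T, \O, \I \leq \SL_2(\C)$, and these act irreducibly on $\C^2$ via their defining two-dimensional representation. Hence $G$ itself acts irreducibly, and by Schur's lemma any central element of $G$ must commute with the whole action and is therefore a scalar matrix. This reduces the problem to identifying the scalar matrices lying in $G$, i.e.\ to computing $Z(G) = G \cap \C^\times I_2$.

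Next I would determine $G \cap \C^\times I_2$ explicitly. The only scalar matrices in $\SL_2(\C)$ are $\pm I_2$, and both lie in every binary polyhedral group, so $\T \cap \C^\times I_2 = \{\pm I_2\}$ and similarly for $\O$ and $\I$. For $G = \mu_d H$ with $H \in \{\T, \O, \I\}$, an element $\zeta_d^k h$ is scalar precisely when $h$ is, so the scalar matrices in $G$ are exactly $\{\pm \zeta_d^k I_2\} = \mu_d \cup (-I_2)\mu_d$. For $\OT_d$ the same computation applies to the even-index part; the odd-index part $\bigcup_{k\text{ odd}} \zeta_{2d}^k\omega\T$ contributes no scalars, since $\omega\T = \O \setminus \T$ contains no scalar matrix (the only scalars in $\O$ are $\pm I_2$, which lie in $\T$). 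Thus in all four cases $G \cap \C^\times I_2 = \mu_d \cup (-I_2)\mu_d$.

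Finally, the hypothesis that $d$ is even enters exactly here: since $d$ is even we have $-I_2 = \zeta_d^{d/2} I_2 \in \mu_d$, whence $(-I_2)\mu_d = \mu_d$ and therefore $G \cap \C^\times I_2 = \mu_d$. Together with the reduction via Schur's lemma this yields $Z(G) = \mu_d$ in every case. The only genuinely delicate point is the $\OT_d$ case, where one must verify that the twisted coset $\omega\T$ introduces no new scalar matrices; the rest is a routine application of irreducibility and Schur's lemma, with the parity of $d$ being precisely what is needed to absorb $-I_2$ into $\mu_d$.
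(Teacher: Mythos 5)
Your proof is correct. It differs from the paper's in how it reduces to scalar matrices: you invoke irreducibility of the two-dimensional action (inherited from the binary polyhedral subgroup) together with Schur's lemma to conclude that every central element is scalar, and then you determine $G \cap \C^\times I_2$ explicitly in each case, including a direct inspection of the odd cosets $\zeta_{2d}^k\omega\T$ of $\OT_d$. The paper instead uses the purely group-theoretic observation that $Z(\mu_d\T)\cap\T \subseteq Z(\T) = \{\pm I_2\}$ (and likewise for $\O$, $\I$), which settles the three families $\mu_d H$ at once, and then handles $\OT_d$ by a separate reduction: a central element of $\OT_d$ must commute with the $\O$-parts of elements of $\mu_{2d}\O \supseteq \OT_d$, hence lies in $Z(\mu_{2d}\O) = \mu_{2d}$, and $\mu_{2d}\cap\OT_d = \mu_d$. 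Both routes ultimately rest on the same fact that the only scalars in a binary polyhedral group are $\pm I_2$, and both use evenness of $d$ only to absorb $-I_2$ into $\mu_d$; your version has the advantage of treating all four families uniformly and of making the $\OT_d$ case self-contained (no appeal to the ambient group $\mu_{2d}\O$), at the modest cost of having to verify irreducibility and to check by hand that the twisted cosets contribute no scalars.
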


\begin{proof}
  We have \(\{\pm I_2\}\subseteq \mu_d\) for even \(d\) and \(Z(\mu_d \T)\cap \T \subseteq Z(\T)=\{\pm I_2\}\) (and analogously for \(\O\) and \(\I\)), which settles the first three groups.

  Let now \(g\in Z(\OT_d)\).
  Note that \(\OT_d \subseteq \mu_{2d}\O\), so for any \(h\in \OT_d\), there exist a \(z\in\mu_{2d}\) and an \(h'\in \O\), such that \(h = z h'\).
  Then \(gh = hg\) implies \(gz h' = zh' g\), so \(gh' = h'g\).
  It follows \(g\in Z(\mu_{2d}\O) = \mu_{2d}\), so \(Z(\OT_d)\leq \mu_{2d}\).
  Since \(\mu_{2d}\cap \OT_d = \mu_d\) and clearly \(\mu_d \subseteq Z(\OT_d)\), it follows \(\mu_d = Z(\OT_d)\).
\end{proof}

\begin{lemma}
  \label{lem:dets}
  For any group \(G\) in \ref{groups1} to \ref{groups4} and any \(g\in G\) we have \(({\det g})I_2\in Z(G) = \mu_d\).
  More precisely, we have \(\{({\det g})I_2\mid g\in G\} = \mu_{d/2}\), if \(G\) belongs to \ref{groups1}, \ref{groups2}, or \ref{groups3} and \(\{({\det g})I_2\mid g\in G\} = \mu_d\) if \(G\) belongs to \ref{groups4}.
\end{lemma}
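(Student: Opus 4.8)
The plan is to reduce the whole statement to bookkeeping with roots of unity by exploiting the single fact that \(\T,\O,\I\leq\SL_2(\C)\), so that every element of these binary groups has determinant \(1\). Combined with \(\det(\zeta_n^k I_2)=(\zeta_n^k)^2=\zeta_n^{2k}\) for a scalar matrix, this means that the determinant of any group element is governed entirely by its scalar part; in particular \((\det g)I_2\) is always a scalar matrix, hence a candidate for lying in the centre computed in Lemma \ref{lem:centres}. The entire proof is then a computation in roots of unity rather than in the non-abelian groups themselves.

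For the families \ref{groups1}, \ref{groups2} and \ref{groups3} I would argue uniformly. Writing a general element of \(\mu_d X\) (with \(X\in\{\T,\O,\I\}\)) as \(g=\zeta_d^k x\) for some \(x\in X\), one obtains \(\det g=\zeta_d^{2k}\det x=\zeta_d^{2k}\). Since \(d\) is even, \(\zeta_d^{2k}=\zeta_{d/2}^{k}\), so \((\det g)I_2\in\mu_{d/2}\subseteq\mu_d=Z(G)\) by Lemma \ref{lem:centres}. Letting \(k\) range over \(0,\dots,d-1\) shows conversely that every element of \(\mu_{d/2}\) is attained, giving \(\{(\det g)I_2\mid g\in G\}=\mu_{d/2}\) exactly, the index-two subgroup of the centre \(\mu_d\).

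Family \ref{groups4} is the only genuinely different case, and here I would feed the explicit decomposition of \(\OT_{\bullet}\) recorded just before the lemma directly into the determinant, substituting the parameter \(2d\). A general element is then either \(\zeta_{4d}^{k}t\) with \(k\) even, or \(\zeta_{4d}^{k}\omega t\) with \(k\) odd, where \(t\in\T\) and \(\omega\in\O\); as \(\det t=\det\omega=1\), in both cases \(\det g=\zeta_{4d}^{2k}=\zeta_{2d}^{k}\). The even exponents sweep out the even powers of \(\zeta_{2d}\), namely \(\mu_d\), while the odd exponents sweep out the odd powers, namely the coset \(\zeta_{2d}\mu_d\); their union is \(\langle\zeta_{2d}\rangle=\mu_{2d}=Z(\OT_{2d})\), so for family \ref{groups4} the determinant image is all of \(Z(G)\). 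This is exactly why family \ref{groups4} differs from the others: the generator \(\omega\) is forced to appear paired with an \emph{odd} power of \(\zeta_{4d}\), and such an element (for instance \(\zeta_{4d}\omega\)) has determinant \(\zeta_{2d}\), a generator of the whole centre rather than of its index-two subgroup.

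I expect the only point demanding care to be the indexing in family \ref{groups4}: one must verify that odd-exponent elements genuinely occur, so that \(\zeta_{2d}\) really is realised as a determinant, and keep the three roots \(\zeta_{4d},\zeta_{2d},\zeta_d\) and the relations \(\zeta_{4d}^2=\zeta_{2d}\), \(\zeta_{2d}^2=\zeta_d\) straight. Beyond this there is no conceptual obstacle: once the containment in \(\SL_2(\C)\) removes the contribution of the non-scalar factors, each assertion reduces to identifying which group of roots of unity the squared scalars fill out.
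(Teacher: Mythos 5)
Your proof is correct and follows essentially the same route as the paper: both arguments reduce everything to the scalar parts via \(\T,\O,\I\leq\SL_2(\C)\), giving \(\mu_{d/2}\) as the determinant image for families \ref{groups1}--\ref{groups3}, and both use the explicit coset decomposition of \(\OT_{\bullet}\) to see that the odd-exponent elements \(\zeta_{4d}^{k}\omega t\) contribute the extra coset needed to fill out the whole centre in family \ref{groups4}.
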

\begin{proof}
  Let \(G = \mu_d \T\) with \(d\) a multiple of 6.
  Then the claim follows directly since \(\T\leq \SL_2(\C)\) and \((\det g)I_2\in \mu_{d/2}\) for \(g\in \mu_d\).
  The same holds for the groups in \ref{groups2} and \ref{groups3}.

  Let \(G = \OT_d\) with \(d\) a multiple of 2 not divisible by 8.
  Then \(G\subseteq \mu_{2d}\O\), so any non-trivial determinant comes from an element \(\zeta_{2d}^kg\) with a primitive \(2d\)-th root of unity \(\zeta_{2d}\), \(g\in\O\) and \(0\leq k < 2d\).
  Then \(\det \zeta_{2d}^kg = \zeta_d^k\in Z(G)\).
  For the second claim notice that for any \(0\leq k < 2d\) either \(\zeta_{2d}^kI_2\in G\) or \(\zeta_{2d}^k\omega\in G\), so we obtain indeed all elements of \(Z(G)\) as determinants.
\end{proof}

\begin{lemma}
  \label{lem:OInosubs}
  The groups \(\O\) and \(\I\) are not conjugate to any subgroup of \(\mu_d \T\) for even \(d\in \Z_{\geq 1}\).
\end{lemma}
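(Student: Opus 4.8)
The plan is to exploit the fact that \(\O\) and \(\I\) lie in \(\SL_2(\C)\), whereas \(\mu_d\T\) contains rather few matrices of determinant \(1\), and then to reach a contradiction purely from the orders of the groups involved. The key structural inputs are that \(\T\leq\SL_2(\C)\), that \(\T\) is a \emph{binary} group and hence contains \(-I_2\), and the orders \(|\T| = 24\), \(|\O| = 48\), \(|\I| = 120\).

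So I would argue by contradiction. Suppose some conjugate \(g\O g^{-1}\), with \(g\in\GL_2(\C)\), is contained in \(\mu_d\T\). Conjugation in \(\GL_2(\C)\) preserves determinants and \(\O\leq\SL_2(\C)\), so every element of \(g\O g^{-1}\) has determinant \(1\); that is, \(g\O g^{-1}\leq \mu_d\T\cap\SL_2(\C)\). The central computational step is then to identify this intersection. Writing a general element of \(\mu_d\T\) as \(\zeta_d^k t\) with \(t\in\T\), its determinant is \(\zeta_d^{2k}\det t = \zeta_d^{2k}\), since \(\T\leq\SL_2(\C)\). This equals \(1\) exactly when \(\zeta_d^k=\pm 1\). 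If \(\zeta_d^k=1\) then \(\zeta_d^k t = t\in\T\); if \(\zeta_d^k=-1\) then \(\zeta_d^k t = -t = (-I_2)t\in\T\), using \(-I_2\in\T\). Hence \(\mu_d\T\cap\SL_2(\C)=\T\), and so \(g\O g^{-1}\leq\T\). But conjugate subgroups have equal order, giving \(|g\O g^{-1}|=|\O|=48 > 24 = |\T|\), a contradiction. The identical argument rules out \(\I\), using \(\I\leq\SL_2(\C)\) and \(|\I|=120>24=|\T|\).

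The only point requiring genuine care is the computation \(\mu_d\T\cap\SL_2(\C)=\T\); everything else is formal. This is where the hypotheses really sit: one needs \(-I_2\in\T\) (automatic for a binary group) to absorb the scalar \(-I_2=\zeta_d^{d/2}I_2\) that can occur when \(d\) is even. As a sanity check and alternative route, I note one could instead pass to \(\PGL_2(\C)=\GL_2(\C)/\C^\times I_2\), where conjugation still descends and where \(\mu_d\T\), \(\O\), and \(\I\) have images \(A_4\), \(S_4\), and \(A_5\) of orders \(12\), \(24\), and \(60\); since neither \(S_4\) nor \(A_5\) embeds in the order-\(12\) group \(A_4\), the same contradiction follows. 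I expect the determinant argument to be the shorter and more self-contained of the two.
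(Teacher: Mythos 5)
Your proof is correct, but your main argument is genuinely different from the paper's; the paper's proof is essentially the alternative you mention only as a sanity check. The paper argues that an embedding \(\O\hookrightarrow\mu_d\T\) would induce an injection \(\O/Z(\O)\hookrightarrow\mu_d\T/Z(\mu_d\T)\), because the preimage of the centre under an injective homomorphism is central, and then compares orders: \(|\O/Z(\O)|=24\) and \(|\I/Z(\I)|=60\) cannot embed into a group of order \(|\mu_d\T/\mu_d|=|\T|/2=12\). Your determinant argument instead identifies \(\mu_d\T\cap\SL_2(\C)=\T\) and compares \(48\) and \(120\) with \(|\T|=24\); the computation is right, and the one delicate point --- absorbing the scalar \(-I_2=\zeta_d^{d/2}I_2\) into \(\T\) --- is handled correctly via \(-I_2\in\T\). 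The trade-off is minor: the paper's version leans on the centre computation \(Z(\mu_d\T)=\mu_d\) of Lemma \ref{lem:centres} and on \(\mu_d\cap\T=\{\pm I_2\}\), while yours needs neither and in fact works verbatim for odd \(d\), where the only \(\zeta_d^k\) with \(\zeta_d^{2k}=1\) is \(1\) itself. Both are complete two-step counting arguments, and either would serve the later applications (e.g.\ in the proof of Lemma \ref{lem:OnotsubOT}), which only use the statement of the lemma.
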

\begin{proof}
  Assume there is an embedding \(\O\hookrightarrow \mu_d \T\) for an even \(d\).
  Then we also would have an injective map \(\O/Z(\O)\hookrightarrow \mu_d\T/Z(\mu_d\T)\), since the preimage of \(Z(\mu_d\T)\) must be contained in \(Z(\O)\).
  But \[|{\mu_d\T/Z(\mu_d\T)}| = |{\mu_d \T/\mu_d}| = \frac{|{\T}|}{2} = 12\] and \[|{\O/Z(\O)}| = \frac{|{\O}|}{2} = 24,\] so this is not possible.
  The same reasoning holds for \(\I\) in place of \(\O\) since \(|{\I/Z(\I)}| = 60\).
\end{proof}

For groups \(G, H\leq \GL_2(\C)\), we write \(H\leq_g G\) if \(gHg^{-1}\leq G\) with \(g\in \GL_2(\C)\).

\begin{lemma}
  \label{lem:OnotsubOT}
  The group \(\O\) is not conjugate to any subgroup of \(\OT_{2d}\) for any \(d\in \Z_{\geq 1}\).
\end{lemma}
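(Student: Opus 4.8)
The plan is to argue by contradiction and use the determinant homomorphism to trap any conjugate of \(\O\) inside \(\mu_{2d}\T\), which Lemma \ref{lem:OInosubs} has already shown cannot contain \(\O\). The crucial feature is that \(\O\leq\SL_2(\C)\), so every element of \(\O\)—and hence of any conjugate \(g\O g^{-1}\), since the determinant is a conjugation invariant—has determinant \(1\). Thus if \(g\O g^{-1}\leq\OT_{2d}\), then \(g\O g^{-1}\) must sit inside the set of determinant-\(1\) elements of \(\OT_{2d}\), and I would pin down where that set lives.

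To do this I would set \(m = 2d\) (which is \emph{even}, the key hypothesis) and use the explicit decomposition \(\OT_m = \mu_m\T\,\cup\,\bigcup_{k\text{ odd}}\zeta_{2m}^k\omega\T\) together with the inclusion \(\OT_m\subseteq\mu_{2m}\O\). Every element of \(\OT_m\) has the form \(\zeta_{2m}^k g\) with \(g\in\O\leq\SL_2(\C)\), so its determinant equals \(\zeta_{2m}^{2k}\det(g)=\zeta_m^k\) (the same computation as in Lemma \ref{lem:dets}). For an element of the non-trivial coset \(\OT_m\setminus\mu_m\T\) the index \(k\) is odd; since \(m\) is even we have \(m\nmid k\) and therefore \(\zeta_m^k\neq1\). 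Hence no element of the non-trivial coset has determinant \(1\), so all determinant-\(1\) elements of \(\OT_m\) lie in the even part \(\mu_m\T\).

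Combining the two observations closes the argument: from \(g\O g^{-1}\leq\OT_m\) together with the fact that all of its elements have determinant \(1\), I conclude \(g\O g^{-1}\subseteq\mu_m\T\), that is, \(\O\leq_g\mu_m\T\) with \(m=2d\) even. This contradicts Lemma \ref{lem:OInosubs}, completing the proof.

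The argument is short, and the only step that needs any care is the determinant computation on the non-trivial coset; the essential input is that the subscript \(2d\) is even, which is exactly what forces \(\zeta_m^k\neq1\) for odd \(k\) and thereby separates the two cosets by their determinants. I do not anticipate a genuine obstacle—the main risk is bookkeeping with \(\zeta_{2m}\) versus \(\zeta_m\). One could sharpen the middle step to show that the determinant-\(1\) subgroup of \(\OT_m\) is in fact exactly \(\T\) and then conclude at once from \(|\O|=48>24=|\T|\); but routing through Lemma \ref{lem:OInosubs} avoids identifying this subgroup precisely and is the cleaner route.
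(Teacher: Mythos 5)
Your proof is correct and follows essentially the same route as the paper's: both use the determinant to show that a conjugate of an element of \(\O\) cannot land in the coset \(\zeta_{4d}^k\omega\T\) with \(k\) odd (because \(2d\) is even), hence \(g\O g^{-1}\) is trapped in \(\mu_{2d}\T\subseteq\mu_{4d}\T\), contradicting Lemma \ref{lem:OInosubs}. The only difference is cosmetic: you phrase it as locating the determinant-one part of \(\OT_{2d}\) globally, while the paper argues element by element.
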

\begin{proof}
  Assume \(\O\leq_g \OT_{2d}\) for a \(g\in \GL_2(\C)\) and let \(h\in \O\).
  By the explicit description of \(\OT_{2d}\) above, we may distinguish two cases.

  First assume \(ghg^{-1}= \zeta_{4d}^k \omega t\) for some \(t\in \T\) and \(1\leq k < 4d\) odd.
  But this would imply \(\det\big(\zeta_{4d}^kI_2) = 1\), so \(k\) must be a multiple of \(2d\) in contradiction to \(k\) being odd.

  Hence we must have \(gh g^{-1}= \zeta_{4d}^kt\) for some \(t\in \T\) and \(0\leq k < 4d\) even.
  As this holds for all \(h\in \O\), it follows \(\O\leq_g\mu_{4d}\T\) in contradiction to Lemma \ref{lem:OInosubs}.
\end{proof}

\begin{lemma}
  \label{lem:OTsubOT}
  There exists \(g\in \GL_2(\C)\) with \(\OT_{2d} \leq_g \OT_{2d'}\) for \(d\) and \(d'\) both not divisible by 4 if and only if \(d\) divides \(d'\) with \(d'/d\) odd.
\end{lemma}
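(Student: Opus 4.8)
The plan is to prove the two implications separately: the ``if'' direction is a direct inclusion of the explicit descriptions, while the ``only if'' direction needs an invariant sensitive to the parity of $d'/d$. For the ``if'' direction, suppose $d \mid d'$ and set $m := d'/d$, odd. Since the groups $\OT_{2d}$, $\OT_{2d'}$ are only defined up to conjugacy, I would choose compatible data: take the same $\T$ and $\omega$ for both, and pick the primitive roots so that $\zeta_{4d} = \zeta_{4d'}^{m}$ (this is genuinely a primitive $4d$-th root, as $\gcd(m,4d') = m$). It then suffices to verify the plain inclusion $\OT_{2d} \subseteq \OT_{2d'}$. On the tetrahedral parts $\mu_{2d}\T \subseteq \mu_{2d'}\T$ this is immediate from $\mu_{2d} \subseteq \mu_{2d'}$, and on the remaining coset it reduces to $\zeta_{4d}\zeta_{4d'}^{-1} \in \mu_{2d'}$. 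This is exactly where oddness enters: $\zeta_{4d}\zeta_{4d'}^{-1} = \zeta_{4d'}^{m-1}$ with $m-1$ even, so it lies in $\mu_{2d'}$.

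For the ``only if'' direction, assume $\OT_{2d} \leq_g \OT_{2d'}$ and write $m = d'/d$. Because the determinant is conjugation-invariant, Lemma~\ref{lem:dets} yields $\mu_{2d} = \{(\det h)I_2 \mid h \in g\OT_{2d}g^{-1}\} \subseteq \{(\det h)I_2 \mid h \in \OT_{2d'}\} = \mu_{2d'}$, so $d \mid d'$. To force $m$ odd I would pass to $\mathrm{PGL}_2(\C)$. Reducing modulo scalars, both $g\OT_{2d}g^{-1}$ and $\OT_{2d'}$ have image isomorphic to $\O/\{\pm I_2\} \cong S_4$, and since one image contains the other they coincide; call this common image $\Gamma$. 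The images of $\mu_{2d}\T$ and of $\mu_{2d'}\T$ are each the copy of $\T/\{\pm I_2\} \cong A_4$, which is the \emph{unique} index-$2$ subgroup of $S_4$. Conjugation by $g$ is an isomorphism between the two copies of $S_4$ carrying one $A_4$ onto the other, and therefore preserves the sign homomorphism.

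The contradiction, under the assumption that $m$ is even, then proceeds as follows. From the determinant computation underlying Lemma~\ref{lem:dets}, an element $h \in \OT_n$ lies in the coset $\mu_n\T$ if and only if $\det h \in \mu_{n/2}$. The element $\zeta_{4d}\omega \in \OT_{2d}$ lies in the non-tetrahedral coset and has determinant $\zeta_{2d} \in \mu_{2d}\setminus\mu_d$, so its image in $\Gamma$ has sign $-1$; transporting through the conjugation, the corresponding $h \in g\OT_{2d}g^{-1} \subseteq \OT_{2d'}$ still has $\det h \in \mu_{2d}\setminus\mu_d$ and image of sign $-1$ in $\Gamma$. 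But $m$ even gives $2d \mid d'$, hence $\mu_{2d} \subseteq \mu_{d'}$ and $\det h \in \mu_{d'}$; by the determinant characterisation this puts $h \in \mu_{2d'}\T$, so its image in $\Gamma$ lies in $A_4$ with sign $+1$ --- a contradiction. Thus $m$ is odd.

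I expect the main obstacle to be organisational rather than conceptual: carefully tracking the even/odd coset decomposition (equivalently the $A_4$ versus $S_4 \setminus A_4$ split) across the conjugation by $g$, and making precise that the intrinsic, determinant-based condition ``$\det h \in \mu_{n/2}$'' is the correct translation of ``even permutation in $\Gamma$''. Once the identification $\Gamma = \overline{g\OT_{2d}g^{-1}} = \overline{\OT_{2d'}}$ and the uniqueness of $A_4 \leq S_4$ are established, the parity of $d'/d$ drops out of a single comparison: the determinant invariant $\det h \in \mu_{2d}\setminus\mu_d$ against the containment $\mu_{2d} \subseteq \mu_{d'}$.
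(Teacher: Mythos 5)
Your proposal is correct, and for the ``only if'' direction it runs parallel to the paper's argument up to the last step: both proofs extract $d \mid d'$ from the determinant of $\zeta_{4d}\omega$ via Lemma~\ref{lem:dets}, and both observe that if $d'/d$ were even the determinant would force $g\zeta_{4d}\omega g^{-1}$ into the tetrahedral coset $\mu_{2d'}\T$ of $\OT_{2d'}$. Where you diverge is in how the contradiction is then reached. The paper concludes $g\omega g^{-1}\in\mu_{16dd'}\T$ and invokes Lemma~\ref{lem:OInosubs}, i.e.\ the counting argument $|{\O/Z(\O)}|=24>12=|{\mu_N\T/Z(\mu_N\T)}|$ applied to the whole group $\O$; you instead pass to $\mathrm{PGL}_2(\C)$, identify the images of $g\OT_{2d}g^{-1}$ and $\OT_{2d'}$ as one and the same copy of $S_4$, and use the uniqueness of $A_4$ as an index-$2$ subgroup to see that conjugation must preserve the tetrahedral/non-tetrahedral splitting, so that $\bar{\omega}$ cannot land in $A_4$. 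Your route is self-contained at the level of a single element and arguably makes the mechanism more transparent (the determinant detects the sign character of $S_4$), at the cost of setting up the projective picture; the paper's route is shorter because Lemma~\ref{lem:OInosubs} is already available. You also prove the ``if'' direction explicitly by choosing compatible roots of unity $\zeta_{4d}=\zeta_{4d'}^{m}$ and checking the two cosets, which the paper's proof omits entirely — this is a genuine (if easy) addition, and your computation $\zeta_{4d}\zeta_{4d'}^{-1}=\zeta_{4d'}^{m-1}\in\mu_{2d'}$ correctly isolates where the oddness of $m$ is used.
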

\begin{proof}
  Assume \(\OT_{2d}\leq_g \OT_{2d'}\) for a \(g\in \GL_2(\C)\).
  We have \(\zeta_{4d}\omega\in\OT_{2d}\) so \(g\zeta_{4d}\omega g^{-1}\in\OT_{2d'}\) and hence \[\det(\zeta_{4d}\omega)I_2 = \zeta_{4d}^2I_2\in Z(\OT_{2d'}) = \mu_{2d'},\] by Lemma \ref{lem:dets}.
  So \(\zeta_{4d}^2 = \zeta_{2d'}^k\) for some \(0\leq k < 2d'\), which already shows \(d\mid d'\).
  Now assume that \(k = d'/d\) is even.
  Then the only elements of \(\OT_{2d'}\) having determinant \(\zeta_{2d'}^k\) lie in \(\zeta_{4d'}^k\T\).
  But then we would have \(g\zeta_{4d}\omega g^{-1}\in \mu_{4d'}\T\), so \(g\omega g^{-1}\in \mu_{16dd'}\T\) in contradiction to Lemma \(\ref{lem:OInosubs}\).
\end{proof}

Every group \(G\) in \ref{groups1} to \ref{groups4} contains a primitive complex reflection group of rank 2.
These groups are the exceptional groups \(G_4\) to \(G_{22}\) in the classification by Shephard and Todd \cite{ShephardTodd54} by \cite[Theorem 3.4]{Cohen76}.
Following \cite{Cohen76}, we can identify the groups \(G_5\) and \(G_7\) to \(G_{22}\) with the groups in \ref{groups1} to \ref{groups4} for ``small" values of \(d\), see Table \ref{tab:comprefl}.

\begin{table}[htbp]
  \centering
  \begin{tabular}{c|c||c|c}
    \multirow{2}{*}{group} & Shephard-Todd & \multirow{2}{*}{group} & Shephard-Todd \\
          & number        &       & number        \\
    \hline
    \hline
    \(\mu_6 \T\)    &  5 & \(\mu_{12} \T\) &  7 \\
    \hline
    \(\mu_4 \O\)    & 13 & \(\mu_8 \O\)    &  9 \\
    \(\mu_{12} \O\) & 15 & \(\mu_{24} \O\) & 11 \\
    \hline
    \(\mu_4 \I\)    & 22 & \(\mu_6 \I\)    & 20 \\
    \(\mu_{10} \I\) & 16 & \(\mu_{12} \I\) & 21 \\
    \(\mu_{20} \I\) & 17 & \(\mu_{30} \I\) & 18 \\
    \(\mu_{60} \I\) & 19 & & \\
    \hline
    \(\OT_2\) & 12 & \(\OT_4\) & 8 \\
    \(\OT_6\) & 14 & \(\OT_{12}\) & 10 \\
  \end{tabular}
  \caption{\label{tab:comprefl}Primitive complex reflection groups}
\end{table}

We now want to describe the largest complex reflection group contained in \(G\).
Let \(G'\) be any primitive complex reflection group contained in \(G\).
Then the largest reflection group \(G_0\), i.e.\ the group generated by the reflections in \(G\), must be primitive too, since it contains \(G'\).
Hence \(G_0\) must be conjugate to one of the groups \(G_4\) to \(G_{22}\) in the classification by Shephard and Todd \cite{ShephardTodd54}.

To reduce the number of cases one has to consider in the proof of the next proposition, we computed which groups of the table are (conjugate to) a subgroup of another group using Magma \cite{Magma}.
We summarize the results in Table \ref{tab:subs}.
(Note that the groups \(G_4\) and \(G_6\) do not contain any other group.)

\begin{table}[htbp]
  \centering
  \begin{tabular}{c|l}
    group & is (conjugate to) a subgroup of \\
    \hline
    \multirow{2}{*}{\(\mu_6\T\)} & \(\mu_{12}\T\), \(\mu_d\O\) for \(d\in\{12, 24\}\), \(\mu_d\I\) for \(d\in\{6, 12, 30, 60\}\),\\
    &\(\OT_{2d}\) for \(d\in\{3, 6\}\)\\

    \(\mu_{12}\T\) & \(\mu_{12}\O\), \(\mu_{24}\O\), \(\mu_{12}\I\), \(\mu_{60}\I\), \(\OT_{12}\)\\

    \(\mu_4 \O\) & \(\mu_8\O\), \(\mu_{12}\O\), \(\mu_{24}\O\)\\

    \(\mu_8 \O\) & \(\mu_{24}\O\)\\

    \(\mu_{12} \O\) & \(\mu_{24}\O\) \\

    \(\mu_{24} \O\) &\\

    \(\mu_4 \I\) & \(\mu_{12} \I\), \(\mu_{20} \I\), \(\mu_{60} \I\)\\

    \(\mu_6 \I\) & \(\mu_{12} \I\), \(\mu_{30} \I\), \(\mu_{60} \I\)\\

    \(\mu_{10} \I\) & \(\mu_{20} \I\), \(\mu_{30} \I\), \(\mu_{60} \I\)\\

    \(\mu_{12} \I\) & \(\mu_{60} \I\)\\

    \(\mu_{20} \I\) & \(\mu_{60}\I\) \\

    \(\mu_{30} \I\) & \(\mu_{60}\I\) \\

    \(\mu_{60} \I\) & \\

    \(\OT_2\) & \(\mu_d\O\) for \(d\in\{4, 8, 12, 24\}\), \(\OT_6\)\\

    \(\OT_4\) & \(\mu_8\O\), \(\mu_{24}\O\), \(\OT_{12}\)\\

    \(\OT_6\) & \(\mu_{12}\O\), \(\mu_{24}\O\)\\

    \(\OT_{12}\) & \(\mu_{24}\O\)\\

  \end{tabular}
  \caption{\label{tab:subs}Subgroup relations}
\end{table}

\begin{samepage}
\begin{proposition}
  \label{prop:reflsub}
  For the groups \(G\) in \ref{groups1} to \ref{groups4} the largest complex reflection group \(G_0\subseteq \GL_2(\C)\) contained in \(G\) is as follows:
  \begin{enumerate}[(a)]
    \item If \(G = \mu_d\T\) then \(G_0 = \mu_{d_0}\T\) with \(d_0\in\{6, 12\}\) the largest number dividing \(d\).
    \item If \(G = \mu_d\O\) then \(G_0 = \mu_{d_0}\O\) with \(d_0\in\{4, 8, 12, 24\}\) the largest number dividing \(d\).
    \item If \(G = \mu_d\I\) then \(G_0 = \mu_{d_0}\I\) with \(d_0\in\{4, 6, 10, 12, 20, 30, 60\}\) the largest number dividing \(d\).
    \item If \(G = \OT_{2d}\) then \(G_0 = \OT_{2d_0}\) with \(d_0\in\{1, 2, 3, 6\}\) the largest number dividing \(d\), such that \(d/d_0\) is odd.
  \end{enumerate}
  In each case we have \(G_0\ideal G\) and \(G/G_0 \cong \mu_{d'}\) with \(d' := d/d_0\).
\end{proposition}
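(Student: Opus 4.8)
The plan is to use that $G_0$ is by definition the subgroup generated by the complex reflections of $G$, and to determine these reflections directly by an eigenvalue computation. Write $H$ for the proposed answer in each case, i.e.\ $\mu_{d_0}\T$, $\mu_{d_0}\O$, $\mu_{d_0}\I$, or $\OT_{2d_0}$. For the stated values of $d_0$ the group $H$ is one of the primitive complex reflection groups in Table~\ref{tab:comprefl}, and $H\subseteq G$: in cases (a)--(c) this is clear since $\mu_{d_0}\subseteq\mu_d$, and in case (d) it follows from the explicit description of $\OT$ together with $d/d_0$ being odd (cf.\ Lemma~\ref{lem:OTsubOT}). As $H$ is generated by its reflections, and each of these is a reflection of $G$, we immediately get $H\subseteq G_0$. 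Everything therefore reduces to the reverse inclusion $G_0\subseteq H$, that is, to showing that \emph{every} reflection of $G$ already lies in $H$.

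The main tool I would use is that the eigenvalues of the elements of $\T$, $\O$, $\I$ are roots of unity of uniformly bounded order. Reading off the element orders ($1,2,3,4,6$ in $\T$; $1,2,3,4,6,8$ in $\O$; $1,2,3,4,5,6,10$ in $\I$) shows that all eigenvalues are $N$-th roots of unity with $N=12,24,60$ respectively. An element of $\mu_d\T$ has the form $\zeta_d^k t$ with $t\in\T\leq\SL_2(\C)$, hence eigenvalues $\zeta_d^k\lambda^{\pm1}$ where $\lambda^{\pm1}$ are those of $t$; it is a reflection precisely when $\zeta_d^k$ equals an eigenvalue of $t$, forcing $\zeta_d^k\in\mu_N\cap\mu_d=\mu_{\gcd(d,N)}$. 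A short arithmetic check gives $\gcd(d,N)=d_0$ in cases (a), (b), (c) (using that $d$ is a multiple of $6$, of $4$, or of $4,6,10$ respectively), so the reflection lies in $\mu_{d_0}\T$, $\mu_{d_0}\O$, or $\mu_{d_0}\I$; this proves $G_0\subseteq H$ and hence $G_0=H$.

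The case (d) is where I expect the real work to be, because the coset decomposition $\OT_{2d}=\bigcup_{k\text{ even}}\zeta_{4d}^k\T\cup\bigcup_{k\text{ odd}}\zeta_{4d}^k\omega\T$ forces a separate treatment of the two cosets together with careful parity bookkeeping. Writing $d=2^a3^bm$ with $a\in\{0,1\}$ and $\gcd(m,6)=1$, one checks $d_0=\gcd(d,6)$. A reflection $\zeta_{4d}^k t$ in the even coset satisfies $\zeta_{4d}^k\in\mu_{\gcd(2d,12)}=\mu_{2d_0}$ by the argument above, so it lies in the even part $\mu_{2d_0}\T$ of $\OT_{2d_0}$. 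For a reflection $\zeta_{4d}^k\omega t$ in the odd coset one uses that the eigenvalues of the elements of $\O\setminus\T$ are exactly the primitive $4$th and $8$th roots of unity (the preimages of the transpositions and $4$-cycles of $S_4$); this makes $\zeta_{4d}^k$ a primitive $2^{a+2}$-th root of unity. The crux is then to verify that $\zeta_{4d}^k$ equals $\zeta_{4d_0}^j$ for an \emph{odd} exponent $j$, which is exactly the condition for the element to lie in the odd part of $\OT_{2d_0}$; this parity check is the main obstacle, and is where the hypothesis ``$d/d_0$ odd'' is genuinely used.

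It remains to establish $G_0\ideal G$ with $G/G_0\cong\mu_{d'}$, which I would do by hand. In cases (a)--(c), $\mu_{d_0}$ is central and $\T,\O,\I$ are normal in $G$, so $\mu_{d_0}\T\ideal\mu_d\T$; since $-I\in\mu_{d_0}$ one finds $\mu_d\cap\mu_{d_0}\T=\mu_{d_0}$, whence $\mu_d\T/\mu_{d_0}\T\cong\mu_d/\mu_{d_0}\cong\mu_{d'}$. In case (d) I would pass to the quotient by $\T$: since $\omega^2\in\T$, the group $\OT_{2d}/\T$ is cyclic of order $2d$, generated by the image of $\zeta_{4d}\omega$, and $\OT_{2d_0}/\T$ is the subgroup generated by $(\zeta_{4d}\omega)^{d/d_0}$; normality and $G/G_0\cong\mu_{d'}$ then follow because subgroups of a cyclic group are normal with cyclic quotient.
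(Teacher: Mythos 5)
Your strategy is genuinely different from the paper's. The paper never computes the reflections of \(G\) directly: it observes that \(G_0\) contains a primitive rank-2 reflection group and hence must itself be one of the seventeen groups \(G_4\)--\(G_{22}\) of Shephard--Todd, identifies these with the groups of Table~\ref{tab:comprefl}, and then rules out all cross-family containments using a Magma-computed subgroup lattice (Table~\ref{tab:subs}) together with Lemmas~\ref{lem:OInosubs}, \ref{lem:OnotsubOT} and \ref{lem:OTsubOT}. You instead characterise the reflections of \(G\) by an eigenvalue computation: a reflection \(\zeta_d^k t\) with \(t\in\T,\O,\I\) forces \(\zeta_d^k\) to be an eigenvalue of \(t^{\pm1}\), hence a \(12\)th, \(24\)th or \(60\)th root of unity, and the arithmetic \(\gcd(d,N)=d_0\) does check out in all three cases. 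This buys a self-contained argument that needs neither the classification/primitivity input nor the computer calculation behind Table~\ref{tab:subs}; what it costs is that all the case analysis is concentrated in case (d), where the two cosets of \(\T\) must be handled separately. Your treatment of the quotient \(G/G_0\cong\mu_{d'}\) (central scalars in cases (a)--(c); passing to the cyclic group \(\OT_{2d}/\T\) in case (d)) is correct and arguably cleaner than the paper's explicit homomorphism.

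The one place where the proposal stops short of a proof is the step you yourself flag as ``the main obstacle'': verifying that a reflection \(\zeta_{4d}^k\omega t\) in the odd coset has \(\zeta_{4d}^k=\zeta_{4d_0}^j\) with \(j\) \emph{odd}. As written this is an assertion, not an argument. It does go through: the eigenvalues of elements of \(\O\setminus\T\) are primitive \(4\)th or \(8\)th roots of unity; writing \(4d=2^{a+2}m\) with \(m\) odd and \(a\in\{0,1\}\), the constraint that \(k\) be odd rules out the primitive \(4\)th roots when \(a=1\) (they require \(k\equiv\pm 2m\pmod{8m}\), which is even) and the primitive \(8\)th roots when \(a=0\) (they do not lie in \(\mu_{4d}\) at all), and in the surviving case one checks directly that the exponent of the corresponding power of \(\zeta_{4d_0}\) is odd for each \(d_0\in\{1,2,3,6\}\). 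You should carry out this short computation explicitly; without it, case (d) --- which is exactly the case where your method diverges most from a routine \(\gcd\) argument --- is not established.
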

\end{samepage}

\begin{proof}

  \begin{enumerate}[(a),leftmargin=0pt,itemindent=*]
    \item Let \(G = \mu_d\T\), \(d\) a multiple of 6.
      Then clearly \(\mu_6\T\leq G\), so by the above discussion we have to consider the groups in the first row of Table \ref{tab:subs}.

      The group \(\mu_{12}\T\) is a subgroup of \(G\) if and only if \(d\) is a multiple of 12.

      For any \(g\in \GL_2(\C)\), we cannot have \(\mu_{\tilde{d}}\O\leq_g G\) or \(\mu_{\tilde{d}}\I\leq_g G\) for any \(\tilde{d}\) since this would imply \(\O\leq_g G\) or \(\I\leq_g G\) which does not hold by Lemma \ref{lem:OInosubs}.

      Assume finally \(\OT_{2\tilde{d}}\leq_g G\) for a \(g\in \GL_2(\C)\).
      Then for all \(h\in \O\) we have \(ghg^{-1} = \zeta_d^kt\) or \(g\zeta_{4\tilde{d}}h g^{-1} = \zeta_{d}^kt\) for some \(0\leq k < d\) and \(t\in \T\).
      But then \(gh g^{-1}\in \mu_{4\tilde{d}d}\T\), so \(\O\leq_g \mu_{4\tilde{d}d}\T\) in contradiction to Lemma \ref{lem:OInosubs}.

      So the largest complex reflection group in \(G\) is \(\mu_{d_0}\T\) with
      \[d_0 :=
        \begin{cases}
          6,& d\text{ is an odd multiple of 6,}\\
          12,& d\text{ is an even multiple of 6}
        \end{cases}
      \]
      and clearly \(G/G_0 \cong \mu_{d/d_0}\).
    \item Let \(G = \mu_d\O\), \(d\) a multiple of 4.
      Then \(\mu_4\O\leq G\), so \(\mu_4\O\leq G_0\) and we only have to consider the supergroups of \(\mu_4\O\) in Table \ref{tab:subs}.
      This already finishes this case.
    \item Let \(G = \mu_d\I\), \(d\) a multiple of 4, 6, or 10.
      Then \(G\) for sure contains \(\mu_4\I\), \(\mu_6\I\) or \(\mu_{10}\I\) and Table \ref{tab:subs} assures us that the only subgroups possible are of the form \(\mu_{d_0}\I\).
    \item Let \(G = \OT_{2d}\) for a \(d\) not divisible by 4.
      By Lemma \ref{lem:OTsubOT}, \(\OT_{2d_0}\) is a subgroup of \(\OT_{2d}\) if and only if \(d_0\) divides \(d\) and \(d/d_0\) is odd.
      Choosing the largest such \(d_0\in\{1, 2, 3, 6\}\) we hence obtain the largest reflection group of type \(\OT_{2d_0}\) contained in \(\OT_{2d}\).
      Such a \(d_0\) always exists since \(d\) is either an odd multiple of 1 or of 2.
      Consulting Table \ref{tab:subs} again, it remains to prove \(\mu_{\tilde{d}}\O\not\leq_g G\) for any \(\tilde{d}\in\{4, 8, 12, 24\}\) and any \(g\in \GL_2(\C)\).
      This follows directly with Lemma \ref{lem:OnotsubOT}.

      Lastly, we prove \(G/G_0 \cong \mu_{d/d_0}\).
      Set \(d' := d/d_0\) and define \(\phi: G\to\mu_{d'}\) by \(\phi(\zeta_{4d}^kg) := \zeta_{d'}^kI_2\) for all \(0 \leq k < 4d\) and \(g\in \O\), such that \(\zeta_{4d}^kg\in G\).
      Let \(\zeta_{4d}^kg\in \ker\phi\).
      Then \(d' \mid k\), so \(k = d'l\) for some \(l\in \N\), where \(l\) is odd if and only if \(k\) is odd, since \(d'\) is odd.
      Hence \(\zeta_{4d}^kg = \zeta_{4d_0}^lg\in \OT_{2d_0}\).
      As \(\phi\) is surjective it follows \(G/\OT_{2d_0}\cong \mu_{d'}\).\qedhere
  \end{enumerate}
\end{proof}

\section{Imprimitive symplectic reflection groups}

We are now ready to describe the already mentioned four families of imprimitive symplectic reflection groups which are symplectically primitive.

For a matrix \(g\in \GL_2(\C)\), set \[g^\vee := \begin{pmatrix} g & 0\\ 0 & (g^\top)^{-1}\end{pmatrix}\in \GL_4(\C).\]
For any subset (in particular group) \(G\subseteq\GL_2(\C)\), define \[G^\vee := \{g^\vee\mid g\in G\}\subseteq\GL_4(\C).\]
Set \[s :=
\begin{pmatrix}
  0 & 0 & 0 & 1\\
  0 & 0 & -1 & 0\\
  0 & -1 & 0 & 0\\
  1 & 0 & 0 & 0
\end{pmatrix}\]
and define \(E(G) = \{ g^\vee, g^\vee s \mid g\in G\}\) for a group \(G\leq \GL_2(\C)\).
Then the groups \(E(G)\) with \(G\) in \ref{groups1} to \ref{groups4} are imprimitive symplectic reflection groups by \cite[Lemma 3.3]{Cohen80}.
In fact, all imprimitive but symplectically primitive symplectic reflection groups are conjugate to one of these groups by \cite[Theorem 3.6]{Cohen80}.

For any group \(G\leq\GL_2(\C)\) denote by \(\mathcal{R}(G)\) the set of reflection in \(G\).
For a group \(G\leq \GL_4(\C)\) denote by \(\mathcal{S}(G)\) the set of symplectic reflection in \(G\).
In what follows let \(G\) be one of the groups in \ref{groups1} to \ref{groups4} and write \(Z(G) = \mu_d\).
Let \(G_0\) be the largest complex reflection group contained in \(G\), as in Proposition \ref{prop:reflsub}.
We have \(G = \mu_d G_0\), but note that \(\mu_d\cap G_0 = Z(G_0)\).

\begin{lemma}
  \label{lem:Gnormal}
  The subgroups \(G^\vee\) and \(G_0^\vee\) are normal subgroups of \(E(G)\).
\end{lemma}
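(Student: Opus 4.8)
The plan is to combine the coset structure $E(G)=G^\vee\cup G^\vee s$ with the observation that $(-)^\vee\colon\GL_2(\C)\to\GL_4(\C)$ is an injective group homomorphism: a one-line block computation gives $g^\vee h^\vee=(gh)^\vee$, since the lower-right blocks multiply as $(g^\top)^{-1}(h^\top)^{-1}=((gh)^\top)^{-1}$. Hence $G^\vee$ and $G_0^\vee$ are honest subgroups of $\GL_4(\C)$ lying in $E(G)$, and because $G_0\ideal G$ by Proposition \ref{prop:reflsub}, applying $(-)^\vee$ immediately yields $G_0^\vee\ideal G^\vee$.

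For $G^\vee$ the argument is short: every element of $G^\vee$ is block-diagonal while $s$ is block-anti-diagonal, so $s\notin G^\vee$; the cosets $G^\vee$ and $G^\vee s$ are therefore distinct, $G^\vee$ has index $2$ in the group $E(G)$, and subgroups of index $2$ are normal.

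For $G_0^\vee$ it remains, given $G_0^\vee\ideal G^\vee$ and $E(G)=G^\vee\cup G^\vee s$, to show that conjugation by $s$ stabilises $G_0^\vee$; indeed, writing an arbitrary element of $E(G)$ as $g^\vee$ or $g^\vee s$ and using $G_0\ideal G$, normality in $E(G)$ follows once we know $sG_0^\vee s^{-1}=G_0^\vee$. Writing $s$ in $2\times2$ blocks as $s=\left(\begin{smallmatrix}0&A\\-A&0\end{smallmatrix}\right)$ with $A=\left(\begin{smallmatrix}0&1\\-1&0\end{smallmatrix}\right)$, one checks $A^2=-I_2$ and $s^2=I_4$, so $s^{-1}=s$, and a direct block multiplication gives, for every $h\in\GL_2(\C)$,
\[
s\,h^\vee s^{-1}=\bigl(\theta(h)\bigr)^\vee,\qquad \theta(h):=A(h^\top)^{-1}A^{-1}.
\]
Since $\theta$ is a group automorphism of $\GL_2(\C)$, the problem reduces to proving $\theta(G_0)=G_0$.

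The key point is the identity $(g^\top)^{-1}=AgA^{-1}$ valid for all $g\in\SL_2(\C)$, which gives $\theta(g)=A^2gA^{-2}=g$; that is, $\theta$ fixes $\SL_2(\C)$ pointwise. On scalars it acts by inversion, $\theta(\zeta I_2)=\zeta^{-1}I_2$, preserving $\mu_{d_0}$. In each case of Proposition \ref{prop:reflsub} the group $G_0$ is generated by the scalar group $\mu_{d_0}$ together with a subgroup of $\SL_2(\C)$ (the groups $\T$, $\O$, $\I$, or, in the case $\OT_{2d_0}$, the generators $\T$ and $\zeta_{4d_0}\omega$), so $\theta$ maps generators to generators and $\theta(G_0)=G_0$ follows. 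The one case demanding care --- and the main obstacle --- is $\OT_{2d_0}$, where elements are not merely a scalar in $\mu_{d_0}$ times an $\SL_2$-element: using the description $\OT_{2d_0}=\bigcup_{k\text{ even}}\zeta_{4d_0}^k\T\cup\bigcup_{k\text{ odd}}\zeta_{4d_0}^k\omega\T$ one must verify that $\theta$, which sends $\zeta_{4d_0}^k\mapsto\zeta_{4d_0}^{-k}$ and fixes $\T$ and $\omega$, respects the even/odd partition; it does, because negation preserves parity. Everything else is routine block-matrix arithmetic.
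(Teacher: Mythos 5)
Your proof is correct, and its core computation coincides with the paper's: both hinge on evaluating $s h^\vee s^{-1}$ as $(\cdot)^\vee$ of a transformed matrix. Where you differ is in the packaging. For $G^\vee$ you invoke the index-$2$ argument (legitimate, since the paper already takes $E(G)$ to be a group by citing Cohen), whereas the paper computes the $s$-conjugate directly; for $G_0^\vee$ you identify the conjugation as $\theta(h)=A(h^\top)^{-1}A^{-1}$ and then verify $\theta(G_0)=G_0$ by a case analysis over the four families, treating $\OT_{2d_0}$ separately via its even/odd coset description. The paper instead observes the closed form $s h^\vee s^{-1}=\bigl((\det h)^{-1}h\bigr)^\vee$ and concludes in one line from Lemma~\ref{lem:dets}, which guarantees $(\det h)I_2\in Z(G_0)\subseteq G_0$ for every $h\in G_0$ (and likewise for $G$). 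Your $\theta(h)$ is in fact exactly $(\det h)^{-1}h$ (since $Ah^\top A^{-1}=(\det h)h^{-1}$ for $2\times2$ matrices), so your ``pointwise on $\SL_2(\C)$, inversion on scalars'' description and the entire $\OT$ case analysis could be replaced by that single appeal to Lemma~\ref{lem:dets}; what your route buys is independence from that lemma at the cost of re-deriving, case by case, structural facts it already encodes.
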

\begin{proof}
  For \(g, h\in G\) we have \(g^\vee h^\vee (g^\vee)^{-1} = (g h g^{-1})^\vee\in G^\vee\).
  If \(h \in G_0\), then also \(g^\vee h^\vee (g^\vee)^{-1}\in G_0^\vee\), since either \(g\in G_0\) or \(g\in Z(G)\).
  It remains to show \(sh^\vee s^{-1}\in G^\vee\) for \(h\in G\).
  Here, an easy calculation shows \(sh^\vee s^{-1} = \big((\det h)^{-1}h\big)^\vee \in G^\vee\) (see Lemma \ref{lem:dets}) and the same holds for \(h \in G_0\).
\end{proof}

\begin{lemma}
  \label{lem:Dnormal}
  The group \(D_d := \langle \mu_d^\vee, s\rangle\leq E(G)\) is the dihedral group of order \(2d\) and a normal subgroup of \(E(G)\).
\end{lemma}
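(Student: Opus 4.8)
The plan is to identify the two generators explicitly and then read off both the isomorphism type and the normality from the computation already carried out in the proof of Lemma~\ref{lem:Gnormal}. Write $r := (\zeta_d I_2)^\vee = \mathrm{diag}(\zeta_d, \zeta_d, \zeta_d^{-1}, \zeta_d^{-1})$, so that $\mu_d^\vee = \langle r\rangle$ is cyclic of order $d$. A direct check shows $s^2 = I_4$. The key relation is $s r s^{-1} = r^{-1}$: this is the special case $h = \zeta_d I_2$ of the identity $s h^\vee s^{-1} = \big((\det h)^{-1}h\big)^\vee$ from the proof of Lemma~\ref{lem:Gnormal}, since $\det(\zeta_d I_2) = \zeta_d^2$ gives $(\det h)^{-1} h = \zeta_d^{-1} I_2$.

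To conclude that $D_d$ is the dihedral group of order $2d$, I would note that $\langle r\rangle$ is diagonal whereas $s$ is not, so $s \notin \langle r\rangle$; hence $\langle r\rangle$ has index $2$ in $D_d$ and $|D_d| = 2d$. Together with $r^d = s^2 = \id$ and $s r s^{-1} = r^{-1}$, this is exactly the presentation of the dihedral group of order $2d$.

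For normality, observe that $E(G)$ is generated by $\{g^\vee \mid g\in G\}$ together with $s$, and that $D_d = \langle r, s\rangle$; hence it suffices to check that each such generator conjugates $r$ and $s$ back into $D_d$. Conjugation by $s$ is immediate: $s r s^{-1} = r^{-1}$ and $s s s^{-1} = s$. Conjugation by $g^\vee$ fixes $r$, because $\zeta_d I_2$ is scalar and therefore central in $\GL_2(\C)$, so $g^\vee r (g^\vee)^{-1} = (g\,\zeta_d I_2\, g^{-1})^\vee = r$. The remaining case is $g^\vee s (g^\vee)^{-1}$; using $s(g^{-1})^\vee s^{-1} = \big((\det g)\, g^{-1}\big)^\vee$ one computes $g^\vee s (g^\vee)^{-1} = \big((\det g) I_2\big)^\vee s$.

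The one step that is not pure bookkeeping is this last conjugate, and it is precisely here that Lemma~\ref{lem:dets} is needed: it guarantees $(\det g) I_2 \in Z(G) = \mu_d$, so that $\big((\det g) I_2\big)^\vee = r^k$ for some $k$ and hence $g^\vee s (g^\vee)^{-1} = r^k s \in D_d$. Thus every generator of $E(G)$ normalises $D_d$, giving $D_d \ideal E(G)$. I expect no genuine obstacle: the only point requiring care is invoking Lemma~\ref{lem:dets} (rather than merely $\det g \in \C^\times$) to keep the conjugate of $s$ inside $D_d$.
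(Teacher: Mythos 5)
Your proof is correct and follows essentially the same route as the paper: verify the dihedral relations for $r$ and $s$, then compute $g^\vee s (g^\vee)^{-1} = \big((\det g)I_2\big)^\vee s$ and invoke Lemma~\ref{lem:dets} to land back in $D_d$. The only cosmetic differences are that you recycle the identity $sh^\vee s^{-1} = \big((\det h)^{-1}h\big)^\vee$ from Lemma~\ref{lem:Gnormal} instead of redoing the block-matrix computation, and you are slightly more careful in justifying that $|D_d| = 2d$ exactly.
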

\begin{proof}
  By definition, \(D_d\) is generated by \(r^\vee\) and \(s\), where \[r := \begin{pmatrix} \zeta_d & 0\\ 0 &\zeta_d\end{pmatrix},\] and the equalities \[(r^\vee)^d = s^2 = (sr^\vee)^2 = I_4\] hold, so \(D_d\) is indeed the dihedral group of order \(2d\).

  Let \(t\in E(G)\), so \(t = g^\vee s^k\) for some \(g\in G\) and \(k \in \{0, 1\}\).
  We have \(tr^\vee t^{-1} = (r^\vee)^{-1}\in D_d\), if \(k = 0\), and \(tr^\vee t^{-1} = r^\vee \in D_d\), if \(k = 1\).
  Further, we have \[t s t^{-1} = g^\vee s^k s s^{-k}(g^\vee)^{-1} = g^\vee s (g^\vee)^{-1},\] as \(s = s^{-1}\).
  But \[g^\vee s (g^\vee)^{-1} = \begin{pmatrix} 0 & A\\ A^{-1} & 0\end{pmatrix}\] with \[A := g\begin{pmatrix} 0 & 1\\ -1 & 0\end{pmatrix}g^\top = \begin{pmatrix} 0 & \det(g) \\ -\det(g) & 0\end{pmatrix}.\]
  By \(\det g = \zeta_d^l\) for some \(0\leq l < d\), it follows \(tst^{-1} = (r^l)^\vee s\in D_d\) and \(D_d\) is indeed a normal subgroup of \(E(G)\).
\end{proof}

\begin{proposition}
  \label{prop:refls}
  The group \(E(G)\) is a symplectic reflection group with symplectic reflections \[S := \mathcal{R}(G)^\vee\overset{.}{\cup}\{z^\vee s\mid z\in\mu_d\}.\]
\end{proposition}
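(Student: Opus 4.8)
The plan is to determine the symplectic reflections in $E(G)$ by a direct rank computation and then to check that they generate the whole group. Recall that an element $\sigma\in\Sp(V)$ is a symplectic reflection exactly when $\rk(\sigma-\id_V)=2$. First I would record that $E(G)\leq\Sp(V)$: the map $g\mapsto g^\vee$ lands in $\Sp_4(\C)$ for the standard form $\bigl(\begin{smallmatrix}0&I_2\\-I_2&0\end{smallmatrix}\bigr)$, and a one-line check shows $s$ preserves the same form, so every element of $E(G)$ is symplectic. The elements of $E(G)$ are the block-diagonal matrices $g^\vee$ and the anti-block-diagonal matrices $g^\vee s$, and since these two shapes are disjoint we get a decomposition $E(G)=G^\vee\,\overset{.}{\cup}\,G^\vee s$ that splits the classification into two cases.

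For the elements $g^\vee$ I would use that $\rk\bigl((g^\top)^{-1}-I_2\bigr)=\rk(g-I_2)$, which gives $\rk(g^\vee-\id_V)=2\,\rk(g-I_2)$. This equals $2$ precisely when $\rk(g-I_2)=1$, that is, when $g$ is a complex reflection. Hence the block-diagonal symplectic reflections are exactly $\mathcal{R}(G)^\vee$, and in particular no non-reflection $g^\vee$ contributes.

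The substantial step, which I expect to be the main obstacle, is the analysis of the elements $g^\vee s$. Writing $s=\bigl(\begin{smallmatrix}0&J_2\\-J_2&0\end{smallmatrix}\bigr)$ with $J_2=\bigl(\begin{smallmatrix}0&1\\-1&0\end{smallmatrix}\bigr)$ and performing a block row reduction of $g^\vee s-\id_V$ (whose top-left block $-I_2$ is invertible), one is left with the Schur complement $(\det g)(g^\top)^{-2}-I_2$, where the computation uses the adjugate identity $J_2\,g\,J_2^{-1}=(\det g)(g^\top)^{-1}$ for $2\times2$ matrices. Therefore $\rk(g^\vee s-\id_V)=2+\rk\bigl((\det g)(g^\top)^{-2}-I_2\bigr)$, so $g^\vee s$ is a symplectic reflection if and only if $g^2=(\det g)I_2$. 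By Cayley--Hamilton this equation rewrites as $\operatorname{tr}(g)\,g=2(\det g)I_2$; since $\det g\neq0$ we cannot have $\operatorname{tr}(g)=0$, and hence $g$ is forced to be scalar, i.e.\ $g\in Z(G)=\mu_d$. This pins down the anti-diagonal symplectic reflections as exactly $\{z^\vee s\mid z\in\mu_d\}$ and shows that the list $S$ is complete.

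Finally, to see that $S$ generates $E(G)$, I would note that $\langle\mathcal{R}(G)^\vee\rangle=G_0^\vee$ by the definition of $G_0$ as the group generated by the reflections in $G$, and that taking $z=I_2$ gives $s\in S$. Multiplying on the right by $s$ then yields $z^\vee=(z^\vee s)s\in\langle S\rangle$ for every $z\in\mu_d$, so $\mu_d^\vee\subseteq\langle S\rangle$. Combining these with the identity $G=\mu_d G_0$ (and the fact that $\vee$ is a homomorphism) gives $G^\vee=\mu_d^\vee G_0^\vee\subseteq\langle S\rangle$, and appending $s$ gives $G^\vee s\subseteq\langle S\rangle$. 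Hence $\langle S\rangle=G^\vee\cup G^\vee s=E(G)$, which together with the two case analyses completes the proof.
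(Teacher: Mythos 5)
Your proposal is correct and follows essentially the same route as the paper: both split $E(G)$ into block-diagonal and anti-block-diagonal elements, compute the rank of $g^\vee s - I_4$ via a block-triangular factorisation, and reduce to the condition (equivalent to the paper's $BA=I_2$) that forces $g$ to be scalar, hence in $Z(G)=\mu_d$. The only difference is that you spell out, via the adjugate identity and Cayley--Hamilton, the ``easy calculation'' the paper leaves to the reader, and you make the generation claim $\langle S\rangle = E(G)$ explicit using $G=\mu_d G_0$.
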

\begin{proof}
  If \(g\in\mathcal{R}(G)\), so \(\rk(g - I_2) = 1\), then \(g^\vee\) is clearly a symplectic reflection.
  Also, for \(z = \left(\begin{smallmatrix} \zeta_d^k & 0\\0 & \zeta_d^k\end{smallmatrix}\right)\in\mu_d\) for some \(0\leq k < d\), we have
  \[z^\vee s =
    \begin{pmatrix}
      0 & 0 & 0 & \zeta_d^k\\
      0 & 0 & -\zeta_d^k & 0\\
      0 & -\zeta_d^{-k} & 0 & 0\\
      \zeta_d^{-k} & 0 & 0 & 0
    \end{pmatrix},\]
  so \(\rk(I_4 - z^\vee s) = 2\) and \(z^\vee s\) is a symplectic reflection.
  Hence all elements in \(S\) are indeed symplectic reflections and \(E(G)\) is a symplectic reflection group since \(E(G) = \langle S \rangle\).

  Let now \(t\in E(G)\) be a symplectic reflection.
  Then either \(t = g^\vee\) or \(t = g^\vee s\) for a \(g\in G\).
  In the first case, it directly follows \(g\in \mathcal{R}(G)\).
  So assume \(t = g^\vee s\).
  For ease of notation, we define \[A := g\begin{pmatrix} 0 & 1\\ -1 & 0 \end{pmatrix}\text{ and } B := (g^\top)^{-1}\begin{pmatrix}0 & -1\\ 1 & 0\end{pmatrix},\] so that \[ t = \begin{pmatrix} 0 & A\\ B & 0\end{pmatrix}.\]
  From \[I_4 - t = \begin{pmatrix} I_2 & A\\ B & I_2\end{pmatrix} = \begin{pmatrix} I_2 & 0\\B & I_2 - BA\end{pmatrix}\begin{pmatrix} I_2 & A\\ 0 & I_2\end{pmatrix}\]
  it follows that \(\rk(I_4 - t) = 2\) if and only if \(BA = I_2\), so \(A = B^{-1}\).
  An easy calculation shows that this requires \(g\) to be a scalar matrix, so \(g\in Z(G) = \mu_d\), as all scalar matrices lie in the centre of \(G\).
  Therefore all symplectic reflections in \(E(G)\) are elements of \(S\).

  Finally, note that the two given subsets of \(S\) contain matrices of different block-types, so their union is disjoint.
\end{proof}

\begin{corollary}
  \label{cor:symprefl}
  All symplectic reflections in \(E(G)\) lie either in \(G_0^\vee\) or in \(D_d\).
  None of the symplectic reflections of \(G_0^\vee\) is conjugate in \(E(G)\) to one of \(D_d\) and vice versa.
\end{corollary}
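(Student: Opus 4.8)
The plan is to extract the two shapes of symplectic reflection from Proposition~\ref{prop:refls} and then separate them using the quotient homomorphism onto \(E(G)/G^\vee\cong C_2\). By that proposition the set of symplectic reflections is the disjoint union \(S = \mathcal{R}(G)^\vee \overset{.}{\cup} \{z^\vee s \mid z \in \mu_d\}\), where the first kind are block-diagonal matrices \(g^\vee\) and the second kind are the block-anti-diagonal matrices \(g^\vee s\).

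For the first assertion I would simply note that \(G_0\) is, by definition, generated by the reflections of \(G\), so \(\mathcal{R}(G) \subseteq G_0\) and therefore \(\mathcal{R}(G)^\vee \subseteq G_0^\vee\); and that \(z^\vee \in \mu_d^\vee \leq D_d\) together with \(s \in D_d\) gives \(z^\vee s \in D_d\). Hence every symplectic reflection lies in \(G_0^\vee\) or in \(D_d\).

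Next I would identify precisely which symplectic reflections occur in each of the two subgroups. Those in \(G_0^\vee\) are block-diagonal, hence are exactly the \(g^\vee\) with \(g \in \mathcal{R}(G)\). For \(D_d = \mu_d^\vee \cup \mu_d^\vee s\) I would check that the block-diagonal part \(\mu_d^\vee\) contains no symplectic reflection at all: a nontrivial \(z^\vee\) with \(z = \zeta_d^k I_2\) has \(\rk(z^\vee - I_4) = 4 \neq 2\). Consequently the symplectic reflections lying in \(D_d\) are precisely the \(z^\vee s\) with \(z \in \mu_d\).

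Finally, for the non-conjugacy I would use the quotient map \(\pi \colon E(G) \to E(G)/G^\vee\). By Lemma~\ref{lem:Gnormal} the subgroup \(G^\vee\) is normal in \(E(G)\); since \(s\) is not block-diagonal we have \(s \notin G^\vee\), while \(s^2 = I_4\), so \(E(G)/G^\vee\) is cyclic of order \(2\) and in particular abelian. Thus \(\pi\) is constant on every \(E(G)\)-conjugacy class. The symplectic reflections of \(G_0^\vee\) lie in \(G^\vee = \ker\pi\), whereas those of \(D_d\), being of the form \(z^\vee s\), lie in the nontrivial coset \(G^\vee s\); so no symplectic reflection of \(G_0^\vee\) can be \(E(G)\)-conjugate to one of \(D_d\), and symmetrically. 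The only genuine computation is the rank count showing that \(\mu_d^\vee\) contains no symplectic reflections—everything else is bookkeeping with the coset invariant \(\pi\)—so I do not anticipate any real obstacle once the block shape is packaged as the homomorphism \(\pi\).
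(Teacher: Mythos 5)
Your proposal is correct and takes essentially the same approach as the paper: the first assertion is read off from Proposition~\ref{prop:refls} together with \(\mathcal{R}(G)=\mathcal{R}(G_0)\), and the non-conjugacy is a normality/conjugation-invariance argument. The only cosmetic difference is that you package the invariant as the coset map \(E(G)\to E(G)/G^\vee\cong C_2\) (so you only need the normality of \(G^\vee\) from Lemma~\ref{lem:Gnormal} plus the block-shape distinction), whereas the paper invokes the normality of \(G_0^\vee\) and of \(D_d\) (Lemmas~\ref{lem:Gnormal} and~\ref{lem:Dnormal}) directly; both give the same one-line conclusion.
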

\begin{proof}
  The first part is clear since \(\mathcal{R}(G) = \mathcal{R}(G_0)\).
  The second part follows from Lemma \ref{lem:Gnormal} and Lemma \ref{lem:Dnormal}.
\end{proof}

We state for later reference:
\begin{lemma}
  \label{lem:sympconj}
  There are two \(D_d\)-conjugacy classes in \(\mathcal{S}(D_d)\), namely \([s]\) and \([(\zeta_dI_2)^\vee s]\).
  In case \(G\) belongs to \ref{groups1}, \ref{groups2}, or \ref{groups3} these are also the \(E(G)\)-conjugacy classes.
  In case \(G\) belongs to \ref{groups4}, there is only one \(E(G)\)-conjugacy class in \(\mathcal{S}(D_d)\).
\end{lemma}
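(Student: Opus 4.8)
The plan is to first pin down $\mathcal{S}(D_d)$ explicitly, then compute the two relevant conjugation actions — that of $D_d$ itself and that of the ``extra'' elements $G^\vee$ — and read off how the parity of the exponent $k$ in the reflection $(\zeta_d^k I_2)^\vee s$ transforms. First I would observe that the rotation part $(\zeta_d^k I_2)^\vee = \mathrm{diag}(\zeta_d^k,\zeta_d^k,\zeta_d^{-k},\zeta_d^{-k})$ has no eigenvalue $1$ for $k\neq 0$, hence is never a symplectic reflection; by Proposition \ref{prop:refls} this leaves $\mathcal{S}(D_d) = \{(\zeta_d^k I_2)^\vee s \mid 0\leq k<d\}$, the reflections of the dihedral group $D_d$. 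Writing $r := \zeta_d I_2$ and using the relation $s r^\vee s^{-1} = (r^\vee)^{-1}$ underlying Lemma \ref{lem:Dnormal}, a direct computation gives that conjugation by $r^\vee$ sends $(r^\vee)^k s \mapsto (r^\vee)^{k+2}s$ and conjugation by a reflection $(r^\vee)^j s$ sends $(r^\vee)^k s \mapsto (r^\vee)^{2j-k}s$. Both operations preserve the parity of $k$, and since $d$ is even by Lemma \ref{lem:centres} this parity is well-defined modulo $d$. As conjugation by $r^\vee$ already reaches every exponent of a given parity, the even exponents form the single $D_d$-class $[s]$ and the odd exponents the single class $[(\zeta_d I_2)^\vee s]$, which settles the first assertion.

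Next I would pass to $E(G)$-conjugacy. Since $D_d = \langle \mu_d^\vee, s\rangle$ and $E(G)=G^\vee\cup G^\vee s$, one has $E(G)=\langle G^\vee, D_d\rangle$, so it suffices to analyse conjugation by $g^\vee$ for $g\in G$ on top of the $D_d$-action already understood. Because $(\zeta_d^k I_2)^\vee$ is scalar it commutes with $g^\vee$, whence
\[
g^\vee\,(\zeta_d^k I_2)^\vee s\,(g^\vee)^{-1} = (\zeta_d^k I_2)^\vee\,\big(g^\vee s (g^\vee)^{-1}\big),
\]
and by the computation in the proof of Lemma \ref{lem:Dnormal} we have $g^\vee s (g^\vee)^{-1} = (\zeta_d^l I_2)^\vee s$ where $\det g = \zeta_d^l$. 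Thus conjugation by $g^\vee$ sends the exponent $k\mapsto k+l$, changing the parity of $k$ exactly when $l$ is odd, i.e.\ exactly when $\det g\notin\mu_{d/2}$.

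Finally I would invoke Lemma \ref{lem:dets}. For $G$ in families \ref{groups1}, \ref{groups2}, \ref{groups3} we have $\{(\det g)I_2\mid g\in G\}=\mu_{d/2}$, so every $l$ is even and every $E(G)$-conjugation preserves the parity of $k$; the $E(G)$-classes therefore coincide with the two $D_d$-classes. For $G$ in family \ref{groups4} we have $\{(\det g)I_2\mid g\in G\}=\mu_d$, so some $g$ has $\det g=\zeta_d$ with $l=1$ odd, and conjugation by this $g^\vee$ carries $s$ to $(\zeta_d I_2)^\vee s$; the two $D_d$-classes thus merge into a single $E(G)$-class, which then exhausts $\mathcal{S}(D_d)$.

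I expect the only real obstacle to be bookkeeping: deriving the two conjugation formulas inside the dihedral group correctly and, above all, reading off the parity shift from $\det g$ through Lemma \ref{lem:dets}. The conceptual content is simply that $E(G)$-conjugacy amounts to tracking the exponent $k$ modulo $2$, and whether the determinant values of $G$ can produce an odd shift — which is precisely the dichotomy between families \ref{groups1}–\ref{groups3} and family \ref{groups4}.
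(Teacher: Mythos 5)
Your proposal is correct and follows essentially the same route as the paper: the $E(G)$-conjugacy question is settled exactly as there, via the computation $g^\vee s (g^\vee)^{-1} = ((\det g)I_2)^\vee s$ from Lemma \ref{lem:Dnormal} combined with the determinant dichotomy of Lemma \ref{lem:dets}. The only difference is that you verify the two $D_d$-classes by a direct dihedral computation (and explicitly identify $\mathcal{S}(D_d)$), whereas the paper outsources that standard step to \cite[Section 8.3]{BellamyThiel16}.
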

\begin{proof}
  For the claim about \(D_d\)-conjugacy, see \cite[Section 8.3]{BellamyThiel16}.
  The computations in the proof of Lemma \ref{lem:Dnormal} show that for \(g\in E(G)\) we have \(gsg^{-1} = z^\vee s\) with \(z\in \{({\det h})I_2\mid h\in G\}\) (and for any such \(z\) there exists a \(g\in E(G)\)).
  Hence \(s\) and \((\zeta_d I_2)^\vee s\) are conjugate in \(E(G)\) if and only if there exists \(h\in G\) with \(\det h = \zeta_d\).
  By Lemma \ref{lem:dets}, this is the case if and only if \(G\) belongs to \ref{groups4}.
\end{proof}

\section{Symplectic reflection algebras}
Let again \(G\) be one of the groups in \ref{groups1} to \ref{groups4}.
Let \(G_0\) be the largest complex reflection group contained in \(G\) and let \(\mu_d = Z(G)\).
Let \(D_d := \langle \mu_d^\vee, s\rangle\leq E(G)\) as before.
Let \(V = \C^4\) with standard symplectic form \(\omega\) (notice that we already implicitly assumed this setting when we defined \(s\)).

We recall the definition of a symplectic reflection algebra as introduced in \cite{EtingofGinzburg02}.
For \(g\in\mathcal{S}(E(G))\) we have \(V = V^g \oplus (V^g)^\perp\) (orthogonal with respect to \(\omega\)).
Let \(\pi_g:V\to (V^g)^\perp\) be the projection and let \(\omega_g\) be the bilinear form defined by \(\omega_g(u, v) := \omega(\pi_g(u), \pi_g(v))\) for all \(u, v\in V\).
The symplectic reflection algebra of \((V, E(G))\) is defined to be \[\H_\c(V, E(G)) := T(V)\rtimes E(G) \Big/\Big\langle [u, v] - \sum_{g\in\mathcal{S}(E(G))} \c(g)\omega_g(u, v) g~\Big|~u, v\in V\Big\rangle,\] where \(\c :\mathcal{S}(E(G)) \to \C\) is an \(E(G)\)-conjugacy invariant function.
From now on we will omit the vector space in the notation and just write \(\H_\c(E(G))\) for this algebra.

We want to construct a simple module of \(\H_\c(E(G))\) of dimension strictly less than \(|{E(G)}|\) and then apply Theorem \ref{thm:EtingofGinzburgKaledin}.
To this end, we are going to deform a suitable module of an algebra \(\H_{\mathbf{c'}}(E(G))\) for a certain parameter \(\mathbf{c'}\).
To be able to state the precise result, we require a bit more notation. \\

By Corollary \ref{cor:symprefl}, we may split \(\c\) in two \(E(G)\)-invariant functions \(\c_1:\mathcal{S}(E(G))\to \C\) and \(\c_2:\mathcal{S}(E(G))\to \C\) given by \[\c_1|_{\mathcal{S}(G_0^\vee)} = \c|_{\mathcal{S}(G_0^\vee)}\text{ and }\c_1|_{\mathcal{S}(D_d)} = 0\text{ resp.\ }\c_2|_{\mathcal{S}(G_0^\vee)} = 0\text{ and }\c_2|_{\mathcal{S}(D_d)} = \c|_{\mathcal{S}(D_d)},\] so we may think of \(\c\) as \(\c_1 + \c_2\).
By abuse of notation, we also write \(\c_1\) resp.\ \(\c_2\) for the restrictions \(\c_1|_{\mathcal{S}(G_0^\vee)}\) resp.\ \(\c_2|_{\mathcal{S}(D_d)}\).

We may consider the symplectic reflection algebras \(\H_{\c_1}(G_0)\) and \(\H_{\c_1}(G)\) (or more precisely \(\H_{\c_1}(G_0^\vee)\) and \(\H_{\c_1}(G^\vee)\)) with the embeddings \(\H_{\c_1}(G_0) \subseteq \H_{\c_1}(G) \subseteq \H_{\c_1}(E(G))\).
Notice however that \(\c_1\) is in general \emph{not} a generic (or even arbitrary) parameter for \(\H_{\c_1}(G_0)\), since \(G_0^\vee\)-invariant functions are not necessarily \(E(G)\)-invariant.

Let \(\chi_0,\dots,\chi_{d - 1}\) be the irreducible characters of \(Z(G) = \mu_d\), ordered such that \[\chi_l\left(\begin{pmatrix} \zeta_d^k & 0\\ 0 & \zeta_d^k\end{pmatrix}\right) = \zeta_d^{kl}\] for all \(0\leq k, l< d\) and a primitive \(d\)-th root of unity \(\zeta_d\).

Notice that \(d\) is even as \(-I_2\in Z(G)\).
We label the irreducible representations of \(D_d\) as follows.
There are four 1-dimensional representations \(\operatorname{Triv}\), \(\operatorname{Sgn}\), \(V_1\) and \(V_2\), where \(\operatorname{Triv}|_{Z(G)^\vee} = \operatorname{Sgn}|_{Z(G)^\vee} = \chi_0\) and \[V_1|_{Z(G)^\vee} = V_2|_{Z(G)^\vee} = \chi_{\frac{d}{2}},\]
(note that \(Z(G)^\vee\leq D_d\)).
Further, there are the 2-dimensional representations \(\phi_1,\dots,\phi_{\frac{d}{2} - 1}\) for which we have \[\phi_i|_{Z(G)^\vee} = \chi_i \oplus \chi_{d - i}.\]
See \cite[Section 8.2]{BellamyThiel16} for more details and precise definitions of these representations.

We say an irreducible representation \(\phi\) of \(D_d\) is \(\c_2\)-\emph{rigid}, if \(\phi\) is (isomorphic to) a simple \(\H_{\c_2}(D_d)\)-module, see \cite{BellamyThiel16} for details.
The following proposition reduces the problem of constructing \(\H_\c(E(G))\)-modules to constructing \(\H_{\c_1}(G)\)-modules.

\begin{proposition}
  \label{prop:r}
  Let \(L\) be a simple \(\H_{\c_1}(G)\)-module and set \[ R(L) := \H_{\c_1}(E(G))\otimes_{\H_{\c_1}(G)}L . \]
  Then \(R(L)\) is a \(\H_\c(E(G))\)-module if and only if all constituents of \(R(L)|_{D_d}\) are \(\c_2\)-rigid.
\end{proposition}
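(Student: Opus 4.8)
The plan is to compare the defining relations of $\H_{\c_1}(E(G))$ and $\H_\c(E(G))$ directly on the induced module $R(L)$, and to observe that the only discrepancy is governed by the symplectic reflections in $D_d$. Since $\H_{\c_1}(G) \subseteq \H_{\c_1}(E(G))$ is a subalgebra, $R(L)$ is by construction a module over $\H_{\c_1}(E(G))$; in particular it carries an action of $E(G)$ and of $V$ satisfying, for all $u,v \in V$,
\[
[u,v] = \sum_{g \in \mathcal{S}(G_0^\vee)} \c_1(g)\,\omega_g(u,v)\,g
\]
as operators on $R(L)$ (recall $\mathcal{S}(G_0^\vee) = \mathcal{R}(G_0)^\vee$ and that $\c_1$ vanishes on $\mathcal{S}(D_d)$). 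I would then ask whether these very same operators make $R(L)$ into a module over $\H_\c(E(G))$. The semidirect-product relations $g\,u\,g^{-1} = g(u)$ involve only $E(G)$ and $V$ and hold in both algebras, so they are automatically satisfied; thus the only relations left to check are the commutator relations. Using Corollary \ref{cor:symprefl} to split $\mathcal{S}(E(G)) = \mathcal{S}(G_0^\vee) \overset{.}{\cup} \mathcal{S}(D_d)$ and $\c = \c_1 + \c_2$ accordingly, the $\H_\c(E(G))$-commutator relation holds on $R(L)$ if and only if
\[
T_{u,v} := \sum_{g \in \mathcal{S}(D_d)} \c_2(g)\,\omega_g(u,v)\,g = 0 \quad\text{on } R(L) \text{ for all } u,v \in V,
\]
because the $\mathcal{S}(G_0^\vee)$-part of the sum already coincides with the commutator coming from the $\H_{\c_1}(E(G))$-structure.

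The next step is to note that the operator $T_{u,v}$ depends only on the restriction $R(L)|_{D_d}$: every $g$ occurring lies in $D_d$, and the scalars $\omega_g(u,v)$ are intrinsic to $g$ and $\omega$. Decomposing $R(L)|_{D_d} = \bigoplus_j W_j$ into irreducible $D_d$-submodules (possible since $\C D_d$ is semisimple), each $g \in D_d$ preserves every $W_j$, so $T_{u,v} = \bigoplus_j T_{u,v}|_{W_j}$ is block-diagonal. Hence $T_{u,v} = 0$ on $R(L)$ for all $u,v$ if and only if $\sum_{g \in \mathcal{S}(D_d)} \c_2(g)\,\omega_g(u,v)\,\rho_j(g) = 0$ for all $u,v \in V$ and every constituent $W_j$, where $\rho_j$ denotes the $D_d$-action on $W_j$.

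Finally I would identify this last vanishing with $\c_2$-rigidity. By the notion of rigidity from \cite{BellamyThiel16}, an irreducible $D_d$-representation $\phi$ is $\c_2$-rigid precisely when $\phi$, with $V$ acting as zero, is a simple $\H_{\c_2}(D_d)$-module; since in such a module $[u,v]$ acts as $0$, the commutator relation of $\H_{\c_2}(D_d)$ reduces to exactly $\sum_{g \in \mathcal{S}(D_d)} \c_2(g)\,\omega_g(u,v)\,\phi(g) = 0$ for all $u,v \in V$. Applying this to each constituent $W_j$ yields the desired equivalence: $R(L)$ is an $\H_\c(E(G))$-module if and only if every constituent of $R(L)|_{D_d}$ is $\c_2$-rigid.

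The substantive points — and the ones I would be most careful about — are the reduction of the whole question to the single operator $T_{u,v}$ (which rests on the shared PBW/semidirect structure of the two algebras and on Corollary \ref{cor:symprefl}), and the correct reading of $\c_2$-rigidity as exactly this vanishing condition, i.e.\ that a simple $\H_{\c_2}(D_d)$-module which stays irreducible over $D_d$ must have $V$ acting as $0$. Everything else is bookkeeping, and I do not expect genuine obstacles there.
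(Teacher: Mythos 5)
Your argument is correct and follows essentially the same route as the paper: reduce the question to whether $\sum_{g\in\mathcal{S}(D_d)}\c_2(g)\omega_g(u,v)\,g$ acts as zero on $R(L)$, pass to the irreducible $D_d$-constituents, and identify the resulting vanishing condition with $\c_2$-rigidity. The only difference is that the paper delegates this last equivalence to \cite[Lemma 4.10]{BellamyThiel16}, whereas you re-derive it inline; the one assertion you flag but do not actually prove (that $V$ must act as zero on a simple $\H_{\c_2}(D_d)$-module whose restriction to $D_d$ is irreducible) is precisely what that cited lemma supplies.
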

\begin{proof}
  By definition, \(R(L)\) is an \(\H_{\c_1}(E(G))\)-module.
  We just need to show that it naturally deforms to a \(\H_\c(E(G))\)-module.
  The defining relations for \(\H_\c(E(G))\) are \[[u, v] = \sum_{g\in\mathcal{S}(G^\vee)}\c_1(g)\omega_g(u, v)g + \sum_{g\in\mathcal{S}(D_d)}\c_2(g)\omega_g(u, v)g\] in contrast to \[[u, v] = \sum_{g\in\mathcal{S}(G^\vee)}\c_1(g)\omega_g(u, v)g\] for \(\H_{\c_1}(E(G))\).
  As \(R(L)\) is an \(\H_{\c_1}(E(G))\)-module this means that \([u, v]\) acts as \[\sum_{g\in\mathcal{S}(G^\vee)}\c_1(g)\omega_g(u, v)g.\]
  Hence \(R(L)\) is an \(\H_\c(E(G))\)-module if and only if \(\sum_{g\in\mathcal{S}(D_d)}\c_2(g)\omega_g(u, v)g\) acts as zero on \(R(L)\) for all \(u, v\in V\) that is, if and only if \[\sum_{g\in \mathcal{S}(D_d)}\c_2(g)\omega_g(u, v)\phi(g) = 0\] for any constituent \(\phi\) of \(R(L)|_{D_d}\).
  By \cite[Lemma 4.10]{BellamyThiel16}, this holds if and only if all constituents of \(R(L)|_{D_d}\) are \(\c_2\)-rigid.
\end{proof}

\begin{lemma}
  \label{lem:rigids}
  An irreducible representation \(\phi\in\Irr D_d\) is \(\c_2\)-rigid for all \(E(G)\)-invariant functions \(\c_2:\mathcal S(D_d)\to \C\) if and only if:
  \begin{enumerate}[(a)]
    \item \(\phi = \phi_i\) for an \(1 < i < (d - 2)/2\), in case \(G\) belongs to \ref{groups1}, \ref{groups2}, or \ref{groups3},
    \item \(\phi = \phi_i\) for an \(1 < i \leq (d - 2)/2\) or \(\phi \in \{V_1,V_2\}\), in case \(G\) belongs to \ref{groups4}.
  \end{enumerate}
\end{lemma}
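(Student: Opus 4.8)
The plan is to translate $\c_2$-rigidity into a concrete system of vanishing conditions and then evaluate it as a family of geometric sums in powers of $\zeta_d$. Recall from the proof of Proposition \ref{prop:r} (which invokes \cite[Lemma 4.10]{BellamyThiel16}) that $\phi\in\Irr D_d$ is $\c_2$-rigid if and only if
\[\sum_{g\in\mathcal{S}(D_d)}\c_2(g)\,\omega_g(u,v)\,\phi(g)=0\quad\text{for all }u,v\in V.\]
Every $g\in\mathcal{S}(D_d)$ has order $2$, so $\pi_g=\tfrac12(1-g)$ and hence $\omega_g(u,v)=\tfrac12\omega\big(u,(1-g)v\big)$; this lets me compute all the forms $\omega_g$ directly from the matrices of $D_d$ acting on $V=\C^4$.

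Writing $\mathcal{S}(D_d)=\{g_k:=(\zeta_d^kI_2)^\vee s\mid 0\le k<d\}$, I would first record, for a fixed symplectic basis of $V$, the numbers $\omega_{g_k}(e_a,e_b)$. A short calculation shows each of these is either $0$, a constant $\pm\tfrac12$, or $\pm\tfrac12\zeta_d^{\pm k}$. On the representation side $g_k=\rho^k\sigma$ with $\rho=(\zeta_dI_2)^\vee$ and $\sigma=s$, so for the two-dimensional $\phi_i$ one gets $\phi_i(g_k)=\left(\begin{smallmatrix}0&\zeta_d^{ik}\\ \zeta_d^{-ik}&0\end{smallmatrix}\right)$ (up to fixed scalars coming from $\phi_i(\sigma)$), while $\operatorname{Triv}(g_k)=1$, $\operatorname{Sgn}(g_k)=-1$ and $V_1(g_k)=(-1)^k=-V_2(g_k)$, since $V_1,V_2$ restrict to $\chi_{d/2}$ on $Z(G)^\vee$. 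Substituting these into the displayed condition turns each matrix entry into a weighted sum of the form $\sum_k \c_2(g_k)\zeta_d^{k\ell}$, for exponents $\ell$ depending on the chosen entry and on $i$.

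The decisive structural input is Lemma \ref{lem:sympconj}. The two $D_d$-classes in $\mathcal{S}(D_d)$ are the even-index and the odd-index $g_k$. In cases \ref{groups1}--\ref{groups3} these are \emph{distinct} $E(G)$-classes, so an $E(G)$-invariant $\c_2$ takes two independent values and the rigidity condition forces the even-index sum and the odd-index sum to vanish \emph{separately}; each is a geometric sum of modulus $d/2$, and it vanishes for every entry exactly when $1<i<(d-2)/2$ (the boundary indices $i=1$ and $i=(d-2)/2$ survive in a $\zeta_d^{\pm k}$-type entry, while the constant entry kills $\operatorname{Triv},\operatorname{Sgn},V_1,V_2$). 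In case \ref{groups4} the two classes \emph{fuse}, so $\c_2$ is constant on $\mathcal{S}(D_d)$ and only the \emph{total} sum over $0\le k<d$ must vanish; now the relevant modulus is the full $d$, the even- and odd-index contributions that obstructed $i=(d-2)/2$ cancel against one another, and the same phenomenon (using $\sum_{k}(-1)^k=0$ as $d$ is even) makes $V_1$ and $V_2$ rigid. This is exactly the difference between (a) and (b).

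The computation itself is routine; the part that needs care --- and the real content --- is tracking how the fusion of conjugacy classes in Lemma \ref{lem:sympconj} changes the modulus of the geometric sums from $d/2$ to $d$, together with the bookkeeping at the boundary index $i=(d-2)/2$ and for the one-dimensional characters $V_1,V_2$, which is precisely where (a) and (b) diverge. I would double-check the sign conventions for $\phi_i(\sigma)$ and for the symplectic pairing so that the antidiagonal form of $\phi_i(g_k)$ and the placement of the $\zeta_d^{\pm k}$-entries of $\omega_{g_k}$ are consistent, but these conventions do not affect which sums vanish and hence do not affect the final classification.
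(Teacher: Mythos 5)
Your proposal is correct, but it takes a genuinely different route from the paper. The paper's proof is essentially a two-line citation: it invokes \cite[Proposition 8.3]{BellamyThiel16}, which already classifies the $\c_2$-rigid representations of the dihedral group $D_d$ purely in terms of the two parameter values $\c_2(s)$ and $\c_2((\zeta_dI_2)^\vee s)$ (namely: $\phi_i$ with $1<i<(d-2)/2$ are always rigid; $\phi_{(d-2)/2}$, $V_1$, $V_2$ are rigid exactly when the two values agree; $\phi_1$, $\operatorname{Triv}$, $\operatorname{Sgn}$ fail whenever the values are not opposite, resp.\ not both zero), and then combines this with Lemma \ref{lem:sympconj} to decide which constraints on $(\c_2(s),\c_2((\zeta_dI_2)^\vee s))$ the $E(G)$-invariance imposes. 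You instead re-derive the content of that cited proposition from scratch, unwinding \cite[Lemma 4.10]{BellamyThiel16} into the vanishing condition $\sum_k\c_2(g_k)\omega_{g_k}(u,v)\phi(g_k)=0$ and evaluating it as geometric sums $\sum_k\c_2(g_k)\zeta_d^{k\ell}$ with $\ell\in\{0,\pm i,\pm(i\pm1),d/2+\ldots\}$. Your bookkeeping is right and reproduces exactly the classification the paper imports: separate vanishing of the even- and odd-index sums (modulus $d/2$) obstructs precisely $\ell\equiv 0 \pmod{d/2}$, i.e.\ the boundary indices $i=1$ and $i=(d-2)/2$ together with the one-dimensional characters, whereas in the fused case only $\ell\equiv 0\pmod d$, i.e.\ $i=1$ and $\operatorname{Triv},\operatorname{Sgn}$, survive; and your use of Lemma \ref{lem:sympconj} as the input distinguishing (a) from (b) is the same as the paper's. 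What your approach buys is self-containedness and a transparent explanation of \emph{why} the boundary cases behave as they do; what it costs is that the entry-by-entry evaluation of the forms $\omega_{g_k}$ is only sketched, so to make this a complete proof you would need to actually carry out that finite computation (or, more economically, simply cite \cite[Proposition 8.3]{BellamyThiel16} as the paper does).
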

\begin{proof}
  By \cite[Proposition 8.3]{BellamyThiel16}, the representations \(\phi_i\) for \(1 < i < (d-2)/2\) are \(\c_2\)-rigid for arbitrary parameters \(\c_2\).
  By Lemma \ref{lem:sympconj}, the function \(\c_2\) is determined by its values at \(s\) and \((\zeta_dI_2)^\vee s\).
  \begin{enumerate}[(a),leftmargin=0pt,itemindent=*]
    \item The symplectic reflections \(s\) and \((\zeta_dI_2)^\vee s\) are not \(E(G)\)-conjugate by Lemma \ref{lem:sympconj}.
      Hence there exist parameters \(\c_2\) with \(\c_2(s) \neq \c_2((\zeta_dI_2)^\vee s)\) and \(\c_2(s) \neq -\c_2((\zeta_dI_2)^\vee s)\) and all other representations are not \(\c_2\)-rigid for those parameters by \cite[Proposition 8.3]{BellamyThiel16}.
    \item Here, Lemma \ref{lem:sympconj} states that there is only one \(E(G)\)-conjugacy class in \(\mathcal{S}(D_d)\).
      Therefore all parameters fulfil \(\c_2(s) = \c_2((\zeta_dI_2)^\vee s)\) and only \(\phi_1\), \(\operatorname{Triv}\), and \(\operatorname{Sgn}\) are not \(\c_2\)-rigid by \cite[Proposition 8.3]{BellamyThiel16}.\qedhere
  \end{enumerate}
\end{proof}

\begin{corollary}
  \label{cor:rigids}
  Let \(\phi\) be any representation of \(D_d\).
  Then all constituents of \(\phi\) are \(\c_2\)-rigid for all \(E(G)\)-invariant functions \(\c_2:\mathcal S(D_d)\to \C\) if and only if:
  \begin{enumerate}[(a)]
    \item \(\chi_i\mid \phi|_{Z(G)}\) implies \(i\notin\{0, 1, \frac{d}{2} - 1, \frac{d}{2}, \frac{d}{2} + 1, d - 1\}\) in case \(G\) belongs to \ref{groups1}, \ref{groups2}, or \ref{groups3},
    \item \(\chi_i\mid \phi|_{Z(G)}\) implies \(i\notin\{0, 1, d - 1\}\) in case \(G\) belongs to \ref{groups4}.
  \end{enumerate}
\end{corollary}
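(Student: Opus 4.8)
The plan is to deduce the corollary directly from Lemma~\ref{lem:rigids} by reformulating the property ``$\phi$ is $\c_2$-rigid for all $\c_2$'' as a condition on the restriction of $\phi$ to $Z(G)=\mu_d$. The observation I would isolate is the following: an irreducible representation of $D_d$ fails to be $\c_2$-rigid for some $E(G)$-invariant $\c_2$ exactly when its restriction to $Z(G)$ involves one of the forbidden characters $\chi_i$ listed in the corollary. Granting this, the statement is immediate. Indeed, restriction to a subgroup is additive on direct sums, so for any representation $\phi$ a character $\chi_i$ occurs in $\phi|_{Z(G)}$ if and only if it occurs in $\psi|_{Z(G)}$ for some irreducible constituent $\psi$ of $\phi$. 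Hence $\phi|_{Z(G)}$ avoids all forbidden $\chi_i$ if and only if no constituent of $\phi$ involves a forbidden character, which by the observation is exactly the condition that every constituent of $\phi$ be $\c_2$-rigid for all $\c_2$.

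To make this precise I would first record the restriction data stated before Lemma~\ref{lem:rigids}: $\operatorname{Triv}$ and $\operatorname{Sgn}$ restrict to $\chi_0$, the representations $V_1,V_2$ restrict to $\chi_{d/2}$, and $\phi_i$ restricts to $\chi_i\oplus\chi_{d-i}$. In case (a), Lemma~\ref{lem:rigids} identifies the $\c_2$-rigid (for all $\c_2$) irreducibles as the $\phi_i$ with $1<i<(d-2)/2$, i.e.\ with $2\le i\le d/2-2$ since $d$ is even; the remaining irreducibles $\operatorname{Triv},\operatorname{Sgn},V_1,V_2,\phi_1,\phi_{d/2-1}$ are not. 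Their restrictions involve precisely the characters $\chi_i$ with $i\in\{0,1,d/2-1,d/2,d/2+1,d-1\}$, which is the index set in (a). In case (b) the non-rigid irreducibles are $\operatorname{Triv},\operatorname{Sgn},\phi_1$, whose restrictions involve precisely $\chi_i$ with $i\in\{0,1,d-1\}$, the index set in (b).

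The step I expect to require genuine care is verifying the two inclusions underlying the observation above. On one hand, every non-rigid irreducible must involve at least one forbidden character, so that its presence as a constituent actually forces a forbidden $\chi_i$ into $\phi|_{Z(G)}$; this is clear from the restrictions just listed. On the other hand---and this is really the crux---no rigid irreducible may involve a forbidden character, since otherwise a rigid constituent could contribute a forbidden $\chi_i$ and the equivalence would break down. For the rigid two-dimensional representations $\phi_i$ the indices $i$ and $d-i$ run through $\{2,\dots,d/2-2\}\cup\{d/2+2,\dots,d-2\}$ in case (a), and through $\{2,\dots,d/2-1\}\cup\{d/2+1,\dots,d-2\}$ in case (b), each disjoint from the corresponding forbidden set; and in case (b) the rigid representations $V_1,V_2$ contribute only $\chi_{d/2}$, which is not forbidden. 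Since characters do not cancel upon restriction, these two checks establish the observation and hence the corollary.
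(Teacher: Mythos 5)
Your argument is correct and is precisely the deduction the paper leaves implicit: the corollary is stated without proof as an immediate consequence of Lemma~\ref{lem:rigids} together with the listed restrictions of $\operatorname{Triv}$, $\operatorname{Sgn}$, $V_1$, $V_2$, $\phi_i$ to $Z(G)^\vee$. Your careful check that the rigid irreducibles contribute no forbidden $\chi_i$ (so that the equivalence, not just one implication, holds) is exactly the point that makes the ``if and only if'' work, and your index bookkeeping in both cases is accurate.
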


The action of \(G_0\) resp.\ \(G\) on \(V\) leaves a Lagrangian subspace \(\h\) invariant and we may identify \(\h\) with the reflection representation of \(G_0\).
Then \(\h = \C^2\) and \(\zeta_d I_2\in \mu_d\) acts as the scalar \(\zeta_d\) on \(\h\) and as \(\zeta_d^{-1}\) on \(\h^\ast\).
We may write \(V = \h \oplus \h^\ast\) (but this decomposition is of course not stable under the action of \(s\)).
Then we can define a \(\Z\)-grading on \(\H_{\c_1}(G_0)\) by putting \(\h^\ast\) in degree 1, \(\h\) in degree \(-1\) and \(G_0\) in degree 0.
In the same way, we obtain a \(\Z\)-grading on \(\H_{\c_1}(G)\) and the inclusion \(\H_{\c_1}(G_0)\subseteq \H_{\c_1}(G)\) preserves this grading.

Set \[\overline{\H}_{\c_1}(G_0) := \H_{\c_1}(G_0)\big/\big(\C[\h]^{G_0}\otimes \C[\h^\ast]^{G_0}\big)_+\H_{\c_1}(G_0).\]
This algebra has a triangular decomposition \[\overline{\H}_{\c_1}(G_0) \cong \C[\h]^{\co G_0}\otimes \C G_0 \otimes \C[\h^\ast]^{\co G_0},\] where \(\C[\h]^{\co G_0} := \C[\h]/\C[\h]^{G_0}_+\C[\h]\), see \cite[Corollary 2.1]{Thiel17}.
Given \(\lambda\in\Irr G_0\), we then have the baby Verma module \[\Delta(\lambda) := \overline{\H}_{\c_1}(G_0)\otimes_{\C[\h^\ast]^{\co G_0}\rtimes \C G_0}\lambda\] of \(\overline{\H}_{\c_1}(G_0)\) corresponding to \(G_0\) as in \cite{Thiel17}.
The module \(\Delta(\lambda)\) has a simple head \(L(\lambda)\) by \cite[Theorem 2.3]{Thiel17}.
We may consider both of them as \(\H_{\c_1}(G_0)\)-modules by letting \(\H_{\c_1}(G_0)\) act via the quotient morphism \(\H_{\c_1}(G_0) \twoheadrightarrow \overline{\H}_{\c_1}(G_0)\).
Notice that \(L(\lambda)\) is also simple as \(\H_{\c_1}(G_0)\)-module.

\begin{lemma}
  \label{lem:simple}
  Let \(\lambda\in \Irr G\). Then
  \begin{enumerate}[(i)]
    \item \(\lambda|_{G_0}\in\Irr G_0\) and
    \item The $H_{\c_1}(G_0)$-module structure on any graded quotient of $\Delta(\lambda|_{G_0})$ extends to $H_{\c_1}(G)$. In particular, $L(\lambda|_{G_0})$ is a graded (simple) $H_{\c_1}(G)$-module. 
  \end{enumerate}
\end{lemma}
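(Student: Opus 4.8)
The plan is to exploit the structure $G = \mu_d G_0$ from Proposition~\ref{prop:reflsub}, in which $\mu_d = Z(G)$ is central, $G_0 \ideal G$, and $G/G_0 \cong \mu_{d'}$ is cyclic with $d' = d/d_0$. \textbf{Part (i)} I would deduce directly from Schur's lemma: because $\lambda$ is irreducible and $\mu_d = Z(G)$ is central, every element of $\mu_d$ acts on $\lambda$ by a scalar, so any $G_0$-stable subspace $W \subseteq \lambda$ is automatically $\mu_d$-stable, hence stable under $\langle G_0, \mu_d\rangle = G$. As $\lambda$ is $G$-irreducible this forces $W \in \{0, \lambda\}$, so $\lambda|_{G_0} \in \Irr G_0$.

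\textbf{Part (ii)} rests on realising $\H_{\c_1}(G)$ as a crossed product of $\H_{\c_1}(G_0)$ by $\mu_{d'}$. Two preliminary points make this work. First, $\mathcal{S}(G^\vee) = \mathcal{S}(G_0^\vee)$ (the symplectic reflections of $G^\vee$ are exactly the $(\cdot)^\vee$ of reflections of $G$, which are reflections of $G_0$), so the defining commutator relations of $\H_{\c_1}(G)$ and $\H_{\c_1}(G_0)$ are literally the same and involve only $\c_1$. Second, conjugation on $V = \h\oplus\h^\ast$ by the generator $e := (\zeta_d I_2)^\vee$ of $\mu_d^\vee$ scales $\h$ by $\zeta_d$ and $\h^\ast$ by $\zeta_d^{-1}$; that is, it is exactly the automorphism $\gamma$ of $\H_{\c_1}(G_0)$ multiplying a homogeneous element of degree $k$ by $\zeta_d^{-k}$. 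The PBW theorem then gives a free decomposition $\H_{\c_1}(G) = \bigoplus_{j=0}^{d'-1}\H_{\c_1}(G_0)\,e^{j}$ subject to $e\,a = \gamma(a)\,e$ for $a\in\H_{\c_1}(G_0)$ and $e^{d'} = z$, where $z := (\zeta_{d_0}I_2)^\vee \in Z(G_0)$.

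Extending a graded quotient $M = \bigoplus_n M_n$ of $\Delta(\lambda|_{G_0})$ then reduces to producing a $\gamma$-semilinear operator $E$ on $M$ (meaning $E(am) = \gamma(a)E(m)$) with $E^{d'} = z$. Since $M$ is generated in degree $0$ by the $G_0$-irreducible module $\lambda|_{G_0}$ (here part (i) is used), $z$ acts on $M_0$ by a scalar $\beta$, and from $z v z^{-1} = \zeta_{d_0}^{-\deg v}v$ one computes that $z$ acts on $M_n$ by $\zeta_{d_0}^{-n}\beta$. I would then set $E|_{M_n} := \zeta_d^{-n}\nu\cdot\mathrm{id}$ for a fixed $d'$-th root $\nu$ of $\beta$, which exists because $\beta$ is a root of unity. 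A direct check confirms $\gamma$-semilinearity and that $E^{d'}$ acts on $M_n$ by $\zeta_{d_0}^{-n}\nu^{d'} = \zeta_{d_0}^{-n}\beta = z$, so $E$ furnishes the desired $\H_{\c_1}(G)$-structure; as $E$ preserves each $M_n$ the module stays graded. Simplicity of $L(\lambda|_{G_0})$ over $\H_{\c_1}(G)$ is then immediate: it is already simple over the subalgebra $\H_{\c_1}(G_0)$, so it has no proper nonzero $\H_{\c_1}(G)$-submodule.

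The hard part will be the crossed-product description of $\H_{\c_1}(G)$ over $\H_{\c_1}(G_0)$ — in particular matching the \emph{group} relation $e^{d'} = z$ with the \emph{scalar} by which $z$ acts on each graded piece. Once this is pinned down, the whole extension problem collapses to solving $\nu^{d'} = \beta$, and everything else follows formally from the coincidence of conjugation-by-$e$ with the grading automorphism $\gamma$.
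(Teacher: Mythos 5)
Your proof is correct and takes essentially the same route as the paper: there, too, the extension is defined by letting $\zeta_d I_2$ act on the degree-$k$ piece of $\Delta(\lambda|_{G_0})$ by $\zeta_d^{-k}$ times the scalar $\chi(\zeta_d I_2)$ by which it acts on $\lambda$ — a canonical choice of your $\nu$ that makes the root extraction unnecessary — and graded submodules/quotients inherit this because the $Z(G)$-action factors through the grading $\C^\times$-action. The remaining differences are cosmetic: for (i) the paper cites Feit's restriction theorem for cyclic quotients instead of your direct Schur's-lemma argument, and it does not package the extension as a crossed product, though that is exactly what is going on.
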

\begin{proof} \hfill
  \begin{enumerate}[(i),leftmargin=0pt,itemindent=*,]
    \item This is \cite[Theorem III.2.14]{Feit82} since \(G/G_0\) is cyclic.
    \item We have to define an action of \(Z(G)\) on \(\Delta(\lambda|_{G_0})\).
      By \cite[Lemma 2.5]{Thiel17}, we have \[\Delta(\lambda|_{G_0}) \cong \C[\h]^{\co G_0}\otimes_\C \lambda|_{G_0}\] as vector spaces, in particular \(\Delta(\lambda|_{G_0})\) is concentrated in non-negative degree.
      Let \(Z(G)\) act by \(\chi\) on \(\lambda\).
      By the above, \(\zeta_d I_2 \in Z(G)=\mu_d\) acts by \(\zeta_d^{-1}\) on \(\h^\ast\).
      We obtain an action of \(Z(G)\) on \(\Delta(\lambda|_{G_0})_k\) for any \(k\geq 0\) by letting \(\zeta_d I_2\) act by \(\zeta_d^{-k}\) on \(\C[\h]^{\co G_0}_k\) and by \(\chi(\zeta_d I_2)\) on \(\lambda|_{G_0}\).
      Then this action of \(Z(G)\) extends \(\Delta(\lambda|_{G_0})\) to a module over \(\H_{\c_1}(G)\).

      Now let \(M\leq \Delta(\lambda|_{G_0})\) be any graded \(\H_{\c_1}(G_0)\)-submodule.
      Since \(M\) is graded, it is stable under the action of \(\C^\times\) induced by the action of \(\C^\times\) on \(\h\).
      The given action of \(Z(G)\) on \(\h^\ast\) is just a restriction of this action to the subgroup \(\langle \zeta_d\rangle\leq \C^\times\).
      Hence this also extends \(M\) to an \(\H_{\c_1}(G)\)-module.

      As \(L(\lambda|_{G_0})\) is a graded quotient of \(\Delta(\lambda|_{G_0})\), this turns \(L(\lambda|_{G_0})\) into an \(\H_{\c_1}(G)\)-module too and \(L(\lambda|_{G_0})\) is of course simple as such a module. \qedhere
  \end{enumerate}
\end{proof}

\begin{theorem}
  \label{thm:nores}
  If there exists \(\lambda\in \Irr G\) such that \(L(\lambda|_{G_0})|_{D_d}\) is \(\c_2\)-rigid for all \(E(G)\)-invariant functions \(\c_2:\mathcal S(D_d)\to \C\) and \(\dim L(\lambda|_{G_0}) < |{G}|\), then \(\C^4/E(G)\) does not admit a symplectic resolution.
\end{theorem}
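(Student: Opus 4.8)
The plan is to invoke the negation of Theorem~\ref{thm:EtingofGinzburgKaledin}: it suffices to exhibit, for \emph{every} parameter $\c$, a simple $\H_\c(E(G))$-module of dimension strictly less than $|E(G)| = 2|G|$. So I would fix an arbitrary $\c$ and split it as $\c = \c_1 + \c_2$ as in Corollary~\ref{cor:symprefl}. By Lemma~\ref{lem:simple} the module $L := L(\lambda|_{G_0})$ is a simple $\H_{\c_1}(G)$-module, and by hypothesis $\dim L < |G|$. I then form the induced module $R(L) = \H_{\c_1}(E(G))\otimes_{\H_{\c_1}(G)} L$ of Proposition~\ref{prop:r}. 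Since $G^\vee$ has index $2$ in $E(G)$ with coset representatives $1, s$, the PBW theorem shows that $\H_{\c_1}(E(G))$ is free of rank $2$ as a right $\H_{\c_1}(G)$-module with basis $\{1, s\}$; hence $R(L) = (1\otimes L)\oplus(s\otimes L)$ has dimension $2\dim L < 2|G| = |E(G)|$, and in particular $R(L)\neq 0$.

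The crux is verifying the hypothesis of Proposition~\ref{prop:r}, namely that every constituent of $R(L)|_{D_d}$ is $\c_2$-rigid; once this holds, $R(L)$ is a genuine nonzero finite-dimensional $\H_\c(E(G))$-module, so any simple quotient of it is a simple $\H_\c(E(G))$-module of dimension $\leq\dim R(L) < |E(G)|$, which is exactly what we need. Note we do \emph{not} need $R(L)$ itself to be simple. To control $R(L)|_{D_d}$ I would first observe that restricting the $\H_{\c_1}(E(G))$-action to the group algebra gives $R(L)|_{E(G)}\cong\Ind_{G^\vee}^{E(G)}(L|_{G^\vee})$, the natural $E(G)$-equivariant comparison map being an isomorphism by the dimension count above. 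Since $D_d G^\vee = E(G)$ and $D_d\cap G^\vee = \mu_d^\vee = Z(G)^\vee$, there is a single $(D_d, G^\vee)$-double coset, so Mackey's formula yields $R(L)|_{D_d}\cong\Ind_{Z(G)}^{D_d}(L|_{Z(G)})$. Concretely, $\mu_d^\vee$ preserves both summands of $R(L)$ — acting as $L|_{Z(G)}$ on $1\otimes L$ and via the twist $z\mapsto szs^{-1} = z^{-1}$ on $s\otimes L$, using $(sr^\vee)^2 = I_4$ from Lemma~\ref{lem:Dnormal} — while $s$ interchanges them. Consequently a character $\chi_i$ of $Z(G)$ occurs in $R(L)|_{Z(G)}$ if and only if $\chi_i$ or $\chi_{d-i}$ occurs in $L|_{Z(G)}$. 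By Corollary~\ref{cor:rigids} the $\c_2$-rigidity of all constituents of a $D_d$-representation depends only on the characters $\chi_i$ appearing in its restriction to $Z(G)$, and the forbidden index sets there are symmetric under $i\mapsto d-i$; thus rigidity of $R(L)|_{D_d}$ is equivalent to the condition on $L|_{Z(G)}$, which is precisely what the $\c_2$-rigidity hypothesis on $L(\lambda|_{G_0})$ records. Feeding this into Proposition~\ref{prop:r} completes the construction for the fixed $\c$, and since $\c$ was arbitrary, Theorem~\ref{thm:EtingofGinzburgKaledin} gives the non-existence of a symplectic resolution.

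The main obstacle is the identification $R(L)|_{D_d}\cong\Ind_{Z(G)}^{D_d}(L|_{Z(G)})$ together with the reduction of $\c_2$-rigidity to data visible on $Z(G)$: one must be careful that the $D_d$-module appearing in Proposition~\ref{prop:r} is the \emph{induced} module $R(L)$ restricted to $D_d$, not $L$ itself (which is only a $G^\vee$-module, as $D_d\not\leq G^\vee$), and that the $i\mapsto d-i$ symmetry of the forbidden sets in Corollary~\ref{cor:rigids} is exactly what makes the hypothesis, phrased in terms of $L$, suffice. The remaining ingredients — freeness of $\H_{\c_1}(E(G))$ over $\H_{\c_1}(G)$ from PBW, and the existence of a simple quotient of the nonzero finite-dimensional module $R(L)$ — are routine.
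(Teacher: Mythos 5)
Your proposal is correct and follows essentially the same route as the paper: induce \(L(\lambda|_{G_0})\) from \(\H_{\c_1}(G)\) to \(\H_{\c_1}(E(G))\), verify the rigidity hypothesis of Proposition~\ref{prop:r}, count dimensions, pass to a simple quotient, and apply the negation of Theorem~\ref{thm:EtingofGinzburgKaledin}. Your Mackey-formula identification of \(R(L)|_{D_d}\) with \(\Ind_{Z(G)}^{D_d}(L|_{Z(G)})\), together with the observation that the forbidden index sets in Corollary~\ref{cor:rigids} are symmetric under \(i\mapsto d-i\), makes explicit a step the paper's proof leaves implicit (it simply asserts that \(L(\lambda|_{G_0})\) ``fulfils the conditions of Proposition~\ref{prop:r}'', even though \(D_d\not\leq G^\vee\) so the hypothesis must be read through the \(Z(G)\)-restriction criterion), so this is a welcome clarification rather than a deviation.
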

\begin{proof}
  Since \(L(\lambda|_{G_0})\) fulfils the conditions of Proposition \ref{prop:r}, we obtain an \(\H_\c(E(G))\)-module \(R(L(\lambda|_{G_0}))\).
  By construction, we have \[\dim R(L(\lambda|_{G_0})) = \dim\big(\H_{\c_1}(E(G))\otimes_{\H_{\c_1}(G)}L(\lambda|_{G_0})\big) = 2\dim L(\lambda|_{G_0}) < |{E(G)}|,\] since \(|{E(G)}| = 2|{G}|\).
  Then any simple quotient \(L\) of \(R(L(\lambda|_{G_0}))\) will also fulfil \(\dim L < |{E(G)}|\).
  As this holds for arbitrary parameters \(\c\), if follows that the variety \(\C^4/E(G)\) does not admit a symplectic resolution by Theorem \ref{thm:EtingofGinzburgKaledin}.
\end{proof}

We use a crude first estimate to show that all but finitely many of the groups on the list admit a simple module as in Theorem \ref{thm:nores}.
Let \(N := |{\mathcal{S}(G_0^\vee)}|\) be the number of symplectic reflections in \(G_0\).
The coinvariant ring \(\C[\h]^{\operatorname{co}G_0}\) is a (positively) graded ring with \((\C[\h]^{\operatorname{co}G_0})_k = 0\) for \(k > N\), by \cite[Proposition 20-3A]{Kane01}.
This implies \(\Delta(\lambda)_k = 0\) for each \(k > N\) or \(k < 0\) and any simple \(G_0\)-module \(\lambda\).

\begin{proposition}
  \label{prop:exsimple}
  The group \(G\) admits a simple module \(\lambda\) as in Theorem \ref{thm:nores} if \(G_0\lneq G\) and
  \begin{enumerate}[(a)]
    \item\label{prop:exsimple:a} \(2N + 6 < d\) in case \(G\) belongs to \ref{groups1}, \ref{groups2}, or \ref{groups3},
    \item\label{prop:exsimple:b} \(N + 3 < d\) in case \(G\) belongs to \ref{groups4}.
  \end{enumerate}
\end{proposition}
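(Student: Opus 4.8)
The plan is to exhibit an explicit \(\lambda\in\Irr G\) meeting the two requirements of Theorem \ref{thm:nores}. By Lemma \ref{lem:simple}, for any \(\lambda\in\Irr G\) the module \(L(\lambda|_{G_0})\) is a graded \(\H_{\c_1}(G)\)-module supported in degrees \(0,\dots,N\) (as \((\C[\h]^{\co G_0})_k=0\) for \(k>N\)), on which \(\zeta_d I_2\in Z(G)=\mu_d\) acts in degree \(k\) by \(\zeta_d^{m-k}\), where \(\chi_m\) is the central character of \(\lambda\). Hence the \(Z(G)\)-characters occurring in \(L(\lambda|_{G_0})\) form a subset of the ``window'' \(\{\chi_m,\chi_{m-1},\dots,\chi_{m-N}\}\) of \(N+1\) consecutive indices modulo \(d\). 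By Corollary \ref{cor:rigids}, \(L(\lambda|_{G_0})|_{D_d}\) is \(\c_2\)-rigid for all \(E(G)\)-invariant \(\c_2\) as soon as this window avoids the forbidden index set \(F\), where \(F=\{0,1,\tfrac d2-1,\tfrac d2,\tfrac d2+1,d-1\}\) in cases \ref{groups1}--\ref{groups3} and \(F=\{0,1,d-1\}\) in case \ref{groups4}. So the whole problem reduces to placing a block of \(N+1\) consecutive residues in the complement of \(F\), while keeping \(\dim L(\lambda|_{G_0})<|G|\).

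For the placement I would use that \(d\) is even, so that \(2N+6<d\) forces \(d\ge 2N+8\) (and likewise \(N+3<d\) forces \(d\ge N+4\)). In cases \ref{groups1}--\ref{groups3} the complement \(\Z/d\Z\setminus F\) is the union of the two arcs \(\{2,\dots,\tfrac d2-2\}\) and \(\{\tfrac d2+2,\dots,d-2\}\), each of length \(\tfrac d2-3\ge N+1\); in case \ref{groups4} it is the single arc \(\{2,\dots,d-2\}\) of length \(d-3\ge N+1\). In either case a gap accommodates \(N+1\) consecutive residues, so a valid window-top \(m\) exists. Moreover I would arrange \(m\) to be \emph{even}: whenever the sub-arc of admissible tops has length \(\ge 2\) this is immediate, and in the single extremal case (\(d=2N+8\), respectively \(d=N+4\)) a direct parity check on the candidate tops \(N+2\) and \(N+2+\tfrac d2\) (respectively \(d-2\)) shows one of them is even.

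Having fixed such an even \(m\), I would take \(\lambda:=\det^{m/2}\), the one-dimensional character \(g\mapsto(\det g)^{m/2}\) of \(G\). Since \(\det(\zeta_d I_2)=\zeta_d^2\), its central character is \(\chi_m\), so its window is exactly the one chosen above. As \(\lambda|_{G_0}\) is one-dimensional, \[\dim L(\lambda|_{G_0})\le \dim\Delta(\lambda|_{G_0})=\dim\C[\h]^{\co G_0}=|G_0|<|G|,\] the last inequality because \(G_0\lneq G\) gives \(d'=d/d_0\ge 2\) and \(|G|=d'|G_0|\) by Proposition \ref{prop:reflsub}. Thus \(\lambda\) satisfies both hypotheses of Theorem \ref{thm:nores}.

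The main obstacle is the bookkeeping in the second paragraph: the \(\c_2\)-rigidity condition constrains only the \emph{residues} of the weights, so everything comes down to the elementary but slightly delicate combinatorics of fitting an \((N+1)\)-block into the complement of \(F\) \emph{and} realising its top residue by an irreducible character of \(G\) of dimension small enough to keep \(\dim L(\lambda|_{G_0})<|G|\). The clean point that makes this work uniformly is the parity of \(d\): it both upgrades the strict inequalities to \(d\ge 2N+8\) and \(d\ge N+4\), and guarantees an even window-top, so that the one-dimensional character \(\det^{m/2}\) can always be used; if one were forced into a higher-dimensional \(\lambda\), the dimension bound would become genuinely tight in the edge case \(d'=2\).
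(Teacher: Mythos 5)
Your argument is correct and is essentially the paper's own proof: the same weight-window analysis (the $Z(G)$-characters of $L(\lambda|_{G_0})$ lie in $\{\chi_{m-N},\dots,\chi_m\}$ because $\C[\h]^{\co G_0}$ vanishes above degree $N$), combined with Corollary \ref{cor:rigids} and the observation that $d$ even upgrades the strict inequalities. The only cosmetic differences are that the paper simply takes $m=d-2$ (automatically even, so no parity discussion is needed) with $\lambda$ an arbitrary irreducible summand of $\Ind_{Z(G)}^G\chi_{d-2}$, bounding $\dim L(\lambda|_{G_0})\leq |G_0|$ via Etingof--Ginzburg, whereas you force $\lambda$ to be the one-dimensional character $\det^{m/2}$ and bound $\dim L$ by $\dim\Delta(\lambda|_{G_0})=|G_0|$; both routes are valid.
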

\begin{proof}
  We only prove \ref{prop:exsimple:a}, \ref{prop:exsimple:b} follows analogously.
  Note that \(d - 2\geq 0\) by assumption.
  Let \(\lambda\in\Irr G\) be any irreducible summand of \(\Ind_{Z(G)}^G\chi_{d - 2}\), so \(\lambda\) restricts to a multiple of \(\chi_{d - 2}\) on \(Z(G)\).
  As in the proof of Lemma \ref{lem:simple}, \(Z(G)\) acts on \(\Delta(\lambda|_{G_0})_k\) by \[\chi_{d - k}\otimes \chi_{d - 2} = \chi_{d - k - 2},\] for \(k \geq 0\).
  Since \(L(\lambda|_{G_0})\) is a quotient of \(\Delta(\lambda|_{G_0})\), this implies that if \[[L(\lambda|_{G_0})|_{Z(G)}:\chi_i]\neq 0\] then \(i\in\{d - N - 2, d - N - 1, \dots, d - 2\}\).
  Hence if \(2N + 6 < d\), then \(N + 2 < \frac{d - 2}{2}\), so \[d - N - 2 = d - (N + 2) > d - \frac{d - 2}{2} = \frac{d}{2} + 1.\]
  Then \(L(\lambda|_{G_0})|_{D_d}\) is \(\c_2\)-rigid for all \(\c_2\) by Corollary \ref{cor:rigids}.

  We have \(\dim L(\lambda|_{G_0}) \leq |{G_0}|\) by \cite[Theorem 1.7]{EtingofGinzburg02}, hence \(\dim L(\lambda|_{G_0}) < |{G}|\) since \(G_0 \lneq G\).
\end{proof}

\section{Sharp bounds}

In Table \ref{tab:numref} we recall the number of reflections in the possible groups \(G_0\) from \cite{Cohen76} together with the minimal value of \(d\) fulfilling the condition in Proposition \ref{prop:exsimple} (which does not mean that there exists a group \(G\) for such a \(d\)).
This gives the groups \(G\), for which Proposition \ref{prop:exsimple} does not apply, as in Table \ref{tab:open}.
Using data computed with \textsc{Champ} \cite{Thiel15}, we want to find all groups which fulfil the assumptions of Theorem \ref{thm:nores}.
We describe the necessary computations and give a concrete example below.

As before, let \(G_0\) be one of the complex reflection groups from Table \ref{tab:comprefl} and let \((G_d)_{d\in\mathcal{D}}\) be the family of supergroups containing \(G_0\) as subgroup generated by the reflections for a set of indices \(\mathcal{D}\) determined by the conditions in \ref{groups1} to \ref{groups4} and Proposition \ref{prop:reflsub}.
Let \(\lambda\in\Irr G_0\) and let \(Z(G_0) = \langle\zeta\rangle\).
Then \(\lambda(\zeta) = \zeta_l I_{\dim\lambda}\) for a certain primitive \(l\)-th root of unity \(\zeta_l\) with \(l\mid d_0\).
Hence we can extend \(\lambda\) to a representation \(\lambda_d\) of \(G_d\) for any \(d\in\mathcal{D}\) by setting \(\lambda_d|_{G_0} = \lambda\) and \(\lambda_d(\eta) = \zeta_{l'}I_{\dim\lambda}\), where \(Z(G) = \langle\eta\rangle\) and \(l' = l\frac{d}{d_0}\) (note that \(l'\mid d\), since \(l\mid d_0\) and \(d_0\frac{d}{d_0} = d\)).
Here, \(\zeta_{l'}\) is a primitive \(l'\)-th root of unity with \(\zeta_{l'}^{d/d_0} = \zeta_l\).
In particular, there may exist more than one choice for \(\lambda_d\).

Now one can find, if it exists, the smallest \(d_1\in\mathcal D\) such that \(\lambda_{d_1}(\eta) = \eta^{-m}I_{\dim\lambda}\) with \(2\leq m < \frac{d_1}{2} - 1\) respectively \(2\leq m < d_1 - 1\) if \(G_0\) belongs to \ref{groups4}.

Let \(k\geq 0\) be minimal such that \(L(\lambda)_k = 0\) with respect to all parameters \(\c_1\), which we can compute using \textsc{Champ}.
Then the claim of Proposition \ref{prop:exsimple} also holds for all \(d\in\mathcal D\) with \(d\geq d_1\) and \(d - (k - 1) - m > \frac{d}{2} + 1\) respectively \(d - (k - 1) - m > 1\) if \(G_0\) belongs to \ref{groups4}.

We give the results of our computations and in particular the best possible values for \(k\) and \(m\) for each of the families of groups in Table \ref{tab:champres}.
Using those bounds for \(d\), we obtain a ``better'' version of Table \ref{tab:open}, see Table \ref{tab:open2}.
However, this also means that for the groups in Table \ref{tab:open2} (besides those, for which we could not do any computations), there does not exist any simple module \(\lambda\) fulfilling the conditions of Theorem \ref{thm:nores}.

\begin{example}
  We carry out the described computations for the group \(G_0 := \mu_6 \T\).
  The family of supergroups is given by \(G_d := \mu_d \T\) for \(d = 12a + 6\) with \(a \in \Z_{\geq 0}\).
  Let \(\omega\in \C\) be a primitive third root of unity and set \(\zeta_6 := -\omega^{-1}\).
  Then we may choose the matrix \(\zeta := \zeta_6I_2\) as generator for \(Z(G_0) = \mu_6\).
  Going through the representations of \(G_0\) in the database of \textsc{Champ}, we see that representation numbered 19 with character \(\phi_{3,4}\) maps \(\zeta\) to \((-\omega - 1)I_3 = \zeta_6^{-2}I_3\).
  In the above notation, we hence have \(m = 2\).
  Note that this is the ``best possible" value of \(m\) since we require \(m \geq 2\).

  This gives the lower bound \(m = 2 < \frac{d_1}{2} - 1\), so that \(d_1 > 6\), that is, \(d_1 = 18 = 3d_0\).
  Using \textsc{Champ}, we see that the top degree of \(L(\lambda)\) is 4, hence we have \(k = 5\).
  Therefore we have the additional restriction \[d - (k - 1) - m = d - 6 > \frac{d}{2} + 1,\] which simplifies to \(d > 14\).
  In conclusion, we improved the lower bound for \(d\) in Proposition \ref{prop:exsimple} to \(d \geq 18\), leaving only the group \(G_0\) itself.
\end{example}

\section{The group \texorpdfstring{\(W(S_2)\)}{W(S2)}}
\label{sec:onemoregroup}

In this section we show that for the group \(W(S_2)\) of \cite[Table III]{Cohen80} there is no symplectic resolution of the corresponding linear quotient.
This group is one of the few symplectically and complex primitive groups; we follow the same strategy used in \cite{BellamySchedler16} to treat these groups.
Namely, we are going to show, or rather compute that there is a subgroup of \(W(S_2)\), say \(H\), which is the stabilizer of a vector.
We can identify \(H\) with the improper symplectic group coming from the complex reflection group \(G(4, 4, 3)\) in the classification by Shephard and Todd \cite{ShephardTodd54}.
Since the corresponding linear quotient of this group does not have a symplectic resolution by \cite{Bellamy09}, the same holds for the quotient by \(W(S_2)\) by a result of Kaledin \cite[Theorem 1.6]{Kaledin03}.

The computer calculations leading to the result we are going to present were carried out and cross-checked using the software package Hecke \cite{Hecke} and the computer algebra systems GAP \cite{Gap} and Magma \cite{Magma}.

\subsection{The group}

The group of interest here is a subgroup of \(\Sp_8(\C)\) of order \(2^{10}3^4 = 82944\).
Like all symplectically and complex primitive groups, it is given by a root system in \cite[Table II]{Cohen80}.
Cohen gives 72 root lines for the group, however, already four are enough to generate a group of the correct order.
The respective root lines are
\begin{align*}
  &( 1, i, 0, 0, 0, 0, 1, -i), &&( 1 - i, 1 - i, 0, 0, 0, 0, 0, 0),\\
  &( 1 - i, 0, 1 - i, 0, 0, 0, 0, 0), &&( 2, 0, 0, 0, 0, 0, 0, 0),
\end{align*}
in \(\C^8\).
Note that these are the ``complexified'' versions of the vectors over the quaternions given in \cite{Cohen80}.
The group \(W(S_2)\leq \Sp_8(\C)\) is now generated by the symplectic reflection matrices
\begin{align*}
  M_1 := \frac{1}{2}&\left(\begin{smallmatrix}
     1 & i &    &    &   &    & -1 & -i \\
    -i & 1 &    &    &   &    & -i &  1 \\
       &   &  1 &  i & 1 &  i &    &    \\
       &   & -i &  1 & i & -1 &    &    \\
       &   &  1 & -i & 1 & -i &    &    \\
       &   & -i & -1 & i &  1 &    &    \\
    -1 & i &    &    &   &    &  1 & -i \\
     i & 1 &    &    &   &    &  i &  1
  \end{smallmatrix}\right),
  &&M_2 := \left(\begin{smallmatrix}
       & -1 &    &    &    &    &    &    \\
    -1 &    &    &    &    &    &    &    \\
       &    &  1 &    &    &    &    &    \\
       &    &    &  1 &    &    &    &    \\
       &    &    &    &    & -1 &    &    \\
       &    &    &    & -1 &    &    &    \\
       &    &    &    &    &    &  1 &    \\
       &    &    &    &    &    &    &  1
  \end{smallmatrix}\right),
\end{align*}
\begin{align*}
  M_3 := &\left(\begin{smallmatrix}
       &    & -1 &    &    &    &    &    \\
       &  1 &    &    &    &    &    &    \\
    -1 &    &    &    &    &    &    &    \\
       &    &    &  1 &    &    &    &    \\
       &    &    &    &    &    & -1 &    \\
       &    &    &    &    &  1 &    &    \\
       &    &    &    & -1 &    &    &    \\
       &    &    &    &    &    &    &  1
  \end{smallmatrix}\right),
  &&M_4 := \left(\begin{smallmatrix}
    -1 &    &    &    &    &    &    &    \\
       &  1 &    &    &    &    &    &    \\
       &    &  1 &    &    &    &    &    \\
       &    &    &  1 &    &    &    &    \\
       &    &    &    & -1 &    &    &    \\
       &    &    &    &    &  1 &    &    \\
       &    &    &    &    &    &  1 &    \\
       &    &    &    &    &    &    &  1
  \end{smallmatrix}\right),
\end{align*}
which one obtains from these root lines, see \cite{Cohen80} for details.

\subsection{The subgroup}

Let \(v := (0, 0, 1, 0, 0, 0, 0, -1)^\top\in \C^8\) and let \(H\leq W(S_2)\) be the stabilizer of \(v\) with respect to the natural action of \(W(S_2)\) on \(\C^8\).
Using the command \texttt{Stabilizer} in either GAP \cite{Gap} or Magma \cite{Magma} one can compute this group: \[H = \langle M_2, M_4, M_1M_3M_4M_2M_4M_3M_1 \rangle.\]
The space \(V^H \leq \C^8\) of vectors fixed by \(H\) is generated by \(v\) and \((0, 0, 0, 1, 0, 0, 1, 0)^\top\).
Its \(H\)-invariant complement \(W\) is then generated by the columns \(w_1,\dots, w_6\in \C^8\) of the matrix
\[
\frac{1}{2}
\left(\begin{smallmatrix}
  -\zeta^3 & -\zeta^3 &          & -\zeta^3 & -\zeta^3 &          \\
  -\zeta   &  \zeta   &          &  \zeta   & -\zeta   &          \\
           &          & -\zeta^3 &          &          &  \zeta^3 \\
           &          & -\zeta   &          &          & -\zeta   \\
  -\zeta   & -\zeta   &          &  \zeta   &  \zeta   &          \\
   \zeta^3 & -\zeta^3 &          &  \zeta^3 & -\zeta^3 &          \\
           &          &  \zeta   &          &          &  \zeta   \\
           &          & -\zeta^3 &          &          &  \zeta^3
\end{smallmatrix}\right),
\]
where \(\zeta\in \C\) is a primitive 8-th root of unity such that \(\zeta^2 = i\).

By changing the basis from \(\C^8\) to \(W \oplus V^H\) and restricting to \(W\) we may identify \(H\) with a subgroup \(H_W\) of \(\Sp(W)\) generated by the matrices
\[\left(\begin{smallmatrix}
    & -i &   &    &   &   \\
  i &    &   &    &   &   \\
    &    & 1 &    &   &   \\
    &    &   &    & i &   \\
    &    &   & -i &   &   \\
    &    &   &    &   & 1
  \end{smallmatrix}\right),
  \left(\begin{smallmatrix}
       & -1 &   &    &    &   \\
    -1 &    &   &    &    &   \\
       &    & 1 &    &    &   \\
       &    &   &    & -1 &   \\
       &    &   & -1 &    &   \\
       &    &   &    &    & 1
  \end{smallmatrix}\right),
  \left(\begin{smallmatrix}
       &   & -i &    &   &   \\
       & 1 &    &    &   &   \\
     i &   &    &    &   &   \\
       &   &    &    &   & i \\
       &   &    &    & 1 &   \\
       &   &    & -i &   &
  \end{smallmatrix}\right).
  \]
The basis of \(W\) was chosen, so that the symplectic form on \(\Sp(W)\) is given by the matrix
\[
  \begin{pmatrix} & I_3 \\ -I_3 & \end{pmatrix}.
\]
One can directly see that \(H_W\) leaves the subspace \(\langle w_1, w_2, w_3\rangle\) invariant and that this is a Lagrangian subspace.
Hence \(H_W\) is an improper group and can be identified with a complex reflection group \(G \leq \GL(\langle w_1, w_2, w_3\rangle)\).
Since this group is of rank 3 and order 96 it must be conjugate to \(G(4, 4, 3)\) in the classification \cite{ShephardTodd54}.

\subsection{Conclusion}

\begin{theorem}
  The linear quotient \(\C^8/W(S_2)\) by the symplectic reflection group \(W(S_2)\) as given in \cite[Table III]{Cohen80} does not admit a symplectic resolution.
\end{theorem}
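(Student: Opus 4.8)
The plan is to exhibit an explicit point of $\C^8$ whose stabilizer $H$ in $W(S_2)$ acts on the normal slice in a way that reproduces a quotient already known to lack a symplectic resolution, and then invoke Kaledin's result to transport non-existence up to the full group. The entire strategy rests on the following general principle (Kaledin \cite[Theorem 1.6]{Kaledin03}): if $V/G$ admits a symplectic resolution, then so does $V_x / G_x$ for every point $x \in V$, where $G_x$ is the stabilizer and $V_x = (V^{G_x})^\perp$ is the symplectic normal slice. Contrapositively, finding a single $x$ for which the slice representation has no symplectic resolution suffices to rule it out for $W(S_2)$.

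First I would fix the explicit generators $M_1,\dots,M_4$ of $W(S_2) \le \Sp_8(\C)$ obtained from four of Cohen's root lines, having checked (by order count, $|W(S_2)| = 2^{10}3^4$) that these four already generate the whole group. Next I would choose the vector $v = (0,0,1,0,0,0,0,-1)^\top$ and compute its stabilizer $H$ using the \texttt{Stabilizer} command in GAP or Magma, obtaining explicit generators. Then I would determine the fixed space $V^H$ and its $H$-invariant symplectic complement $W$, writing down an explicit basis $w_1,\dots,w_6$ diagonalizing the situation so that the restricted symplectic form on $W$ takes the standard block form $\left(\begin{smallmatrix} & I_3 \\ -I_3 & \end{smallmatrix}\right)$. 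In this basis the restricted group $H_W \le \Sp(W) = \Sp_6(\C)$ is generated by three explicit $6\times 6$ matrices.

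The conceptual heart is then recognizing $H_W$ concretely. I would verify that $H_W$ preserves the Lagrangian subspace $\langle w_1, w_2, w_3\rangle$, so that $H_W$ is an \emph{improper} symplectic reflection group and hence arises from a genuine complex reflection group $G \le \GL_3(\C)$ acting on this Lagrangian, with $W \cong L \oplus L^\ast$ as a $G$-module. Having computed $|H_W| = 96$ and $\rk = 3$, I would identify $G$ with $G(4,4,3)$ in the Shephard--Todd classification; among rank-$3$ complex reflection groups of order $96$ this is forced. The key input is that $\C^6/G(4,4,3)$ admits \emph{no} symplectic resolution: by Etingof--Ginzburg/Gordon/Bellamy \cite{Bellamy09}, the improper groups yielding a symplectic resolution are exactly $G(m,1,n)$ and $G_4$, and $G(4,4,3)$ is neither.

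The main obstacle, and the reason this is a \emph{computation} rather than a clean argument, is the identification of $G$ with $G(4,4,3)$ rather than merely with some abstract group of order $96$: one must pin down the precise Shephard--Todd type, and the claim ``order $96$ and rank $3$ forces $G(4,4,3)$'' needs the classification data to be checked carefully, which is exactly why the authors cross-check with Hecke \cite{Hecke}, GAP \cite{Gap} and Magma \cite{Magma}. Once $G = G(4,4,3)$ is established, the proof concludes immediately: $\C^6/H_W$ is a symplectic slice of $\C^8/W(S_2)$ that admits no symplectic resolution by \cite{Bellamy09}, so by Kaledin's slice theorem \cite[Theorem 1.6]{Kaledin03} the quotient $\C^8/W(S_2)$ admits none either. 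The work lies entirely in producing and validating the explicit matrices and the group identification; the theoretical scaffolding is standard.
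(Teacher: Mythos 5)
Your proposal is correct and follows essentially the same route as the paper: the same slice vector $v = (0,0,1,0,0,0,0,-1)^\top$, the same computation of the stabilizer and its action on the symplectic complement, the same identification of the slice group with the improper group coming from $G(4,4,3)$, and the same appeal to Kaledin's slice theorem combined with the non-existence result of \cite{Bellamy09}. No gaps; the only caveat (that the identification with $G(4,4,3)$ is a computation requiring careful cross-checking) is one the paper itself acknowledges by using multiple computer algebra systems.
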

\begin{proof}
  Assume there does exist such a resolution.
  Let \(v\in \C^8\) be any vector, \(G_v\leq W(S_2)\) the stabilizer of this vector and \(V\leq C^8\) the \(G_v\)-invariant complement of the subspace \((\C^8)^{G_v}\) of vectors fixed by \(G_v\).
  Then also \(V/G_v\) admits a symplectic resolution by \cite[Theorem 1.6]{Kaledin03}.
  However, by the calculations above there exists a vector \(v\in \C^8\) such that \(G_v\) acts on the invariant complement of the fixed space as \(G(4, 4, 3)\).
  Hence the quotient by \(G_v\) does not admit a symplectic resolution by \cite[Corollary 1.2]{Bellamy09} and the same must hold for \(\C^8/W(S_2)\).
\end{proof}

\begin{remark}
  One might want to use the same approach as presented in this section for the remaining groups.
  However, if the group is of dimension 4 any non-trivial subgroup stabilizing a vector is of dimension 2, so that the corresponding quotient by the subgroup always admits a symplectic resolution.

  This leaves only the groups \(W(S_1)\), \(W(R)\) and \(W(U)\), which are of dimension 6, 8 and 10 respectively.
  For the group \(W(S_1)\) we could not find any suitable subgroup, and the groups \(W(R)\) and \(W(U)\) are too large so that an exhaustive search for subgroups is not feasible.
\end{remark}

\clearpage
\section{Tables}
\begin{table}[htbp]
  \centering
  \begin{tabular}{c | c | c || c | c | c}
    \multirow{2}{*}{Group} & Number of & Minimal &
    \multirow{2}{*}{Group} & Number of & Minimal \\
    & reflections & value of \(d\) &
    & reflections & value of \(d\) \\
    \hline
    \hline
    \(\mu_6\T\) & 16 & 39 &
    \(\mu_{12}\T\) & 22 & 51 \\
    \hline
    \(\mu_4\O\) & 18 & 43 &
    \(\mu_8\O\) & 30 & 67 \\
    \(\mu_{12}\O\) & 34 & 75 &
    \(\mu_{24}\O\) & 46 & 99 \\
    \hline
    \(\mu_4\I\) & 30 & 67 &
    \(\mu_6\I\) & 40 & 87 \\
    \(\mu_{10}\I\) & 48 & 103 &
    \(\mu_{12}\I\) & 70 & 147 \\
    \(\mu_{20}\I\) & 78 & 163 &
    \(\mu_{30}\I\) & 88 & 183 \\
    \(\mu_{60}\I\) & 118 & 243 &
    & & \\
    \hline
    \(\OT_2\) & 12 & 16 &
    \(\OT_4\) & 18 & 22 \\
    \(\OT_6\) & 28 & 32 &
    \(\OT_{12}\) & 34 & 38 \\
  \end{tabular}
  \caption{\label{tab:numref}Number of reflections in the groups \(G_0\)}
\end{table}

\begin{table}[htbp]
  \centering
  \begin{tabular}{c | l || c | l}
    \multirow{2}{*}{\(G_0\)} & Groups containing \(G_0\) as & \multirow{2}{*}{\(G_0\)} & Groups containing \(G_0\) as \\
    & largest reflection group & & largest reflection group\\
    \hline
    \hline
    \(\mu_6\T\) & \(\mu_d \T\), \(d \in\{6, 18, 30\}\) &
    \(\mu_{12}\T\) & \(\mu_d \T\), \(d \in \{12, 24, 36, 48\}\)\\
    \hline
    \(\mu_4\O\) & \(\mu_d \O\), \(d \in \{4, 20, 28\}\) &
    \(\mu_8\O\) & \(\mu_d \O\), \(d \in \{8, 16, 32, 40, 56, 64\}\)\\
    \(\mu_{12}\O\) & \(\mu_d \O\), \(d \in \{12, 36, 60\}\) &
    \(\mu_{24}\O\) & \(\mu_d \O\), \(d \in \{24, 48, 72, 96\}\)\\
    \hline
    \(\mu_4\I\) & \(\mu_d \I\), \(d \in \{4, 8, 16, 28, 32, 44, 52, 56, 64\}\) &
    \(\mu_6\I\) & \(\mu_d \I\), \(d \in \{6, 18, 42, 54, 66, 78\}\)\\
    \(\mu_{10}\I\) & \(\mu_d \I\), \(d \in \{10, 50, 70\}\) &
    \(\mu_{12}\I\) & \(\mu_d \I\), \(d \in \{12, 24, 36, 48, 72, 84,\)\\
    \(\mu_{20}\I\) & \(\mu_d \I\), \(d \in \{20, 40, 80, 100, 140\}\) & & \phantom{\(\mu_d\I\), \(d\in\{\)}\(96, 108, 132, 144\}\)\\
    \(\mu_{30}\I\) & \(\mu_d \I\), \(d \in \{30, 90, 150\}\) &
    \(\mu_{60}\I\) & \(\mu_d \I\), \(d \in \{60, 120, 180, 240\}\)\\
    \hline
    \(\OT_2\) & \(\OT_d\), \(d\in \{2, 10, 14\}\) &
    \(\OT_4\) & \(\OT_d\), \(d \in \{4, 20\}\)\\
    \(\OT_6\) & \(\OT_d\), \(d \in \{6, 18, 30\}\) &
    \(\OT_{12}\) & \(\OT_d\), \(d \in \{12, 36\}\)\\
  \end{tabular}
  \caption{\label{tab:open}Groups for which Proposition \ref{prop:exsimple} does not apply}
\end{table}


\begin{table}[htbp]
  \centering
  \begin{tabular}{c | c | c | c | c | c | c | c }
    \multirow{2}{*}{\(G_0\)} & Shephard-  & Character         & Number            & \multirow{2}{*}{\(k\)} & \multirow{2}{*}{\(d_1\)} & \multirow{2}{*}{\(m\)} & Lower          \\
                             & Todd       & of \(\lambda\)    & in \textsc{Champ} &                        &                          &                        & bound of \(d\) \\
    \hline
    \(\mu_6\T\)              & \(G_5\)    & \(\phi_{3,4}\)    & 19                & 5                      & \(3d_0\)                 & 2                      & 15             \\
    \(\mu_{12}\T\)           & \(G_7\)    & \(\phi_{3,10}\)   & 37                & 7                      & \(d_0\)                  & 2                      & 19             \\
    \hline
    \(\mu_4\O\)              & \(G_{13}\) & \(\phi_{2,1}\)    & 7                 & 3                      & \(5d_0\)                 & 3                      & 11             \\
    \(\mu_8\O\)              & \(G_9\)    & \(\phi_{4,5}\)    & 32                & 7                      & \(2d_0\)                 & 3                      & 21             \\
    \(\mu_{12}\O\)           & \(G_{15}\) & \(\phi_{3,10}''\) & 36                & 11                     & \(d_0\)                  & 2                      & 27             \\
    \(\mu_{24}\O\)           & \(G_{11}\) & \multicolumn{6}{l}{\textit{No data available.}} \\
    \hline
    \(\mu_{4}\I\)            & \(G_{22}\) & \(\phi_{4,6}\)    & 12                & 1                      & \(2d_0\)                 & 2                      & 7              \\
    \(\mu_{6}\I\)            & \(G_{20}\) & \(\phi_{3,10}'\)  & 13                & 1                      & \(3d_0\)                 & 2                      & 7              \\
    \(\mu_{10}\I\)           & \(G_{16}\) & \(\phi_{5,8}\)    & 39                & 9                      & \(d_0\)                  & 2                      & 23             \\
    \(\mu_{12}\I\)           & \(G_{21}\) & \multicolumn{6}{l}{\textit{No data available.}} \\
    \(\mu_{20}\I\)           & \(G_{17}\) & \multicolumn{6}{l}{\textit{No data available.}} \\
    \(\mu_{30}\I\)           & \(G_{18}\) & \multicolumn{6}{l}{\textit{No data available.}} \\
    \(\mu_{60}\I\)           & \(G_{19}\) & \multicolumn{6}{l}{\textit{No data available.}} \\
    \hline
    \(\OT_2\)                & \(G_{12}\) & \(\phi_{2,1}\)    & 3                 & 3                      & \(5d_0\)                 & 3                      & 7              \\
    \(\OT_4\)                & \(G_8\)    & \(\phi_{4,5}\)    & 15                & 7                      & \(5d_0\)                 & 3                      & 11             \\
    \(\OT_6\)                & \(G_{14}\) & \(\phi_{2,4}\)    & 14                & 5                      & \(3d_0\)                 & 2                      & 9              \\
    \(\OT_{12}\)             & \(G_{10}\) & \(\phi_{3,10}'\)  & 36                & 11                     & \(d_0\)                  & 2                      & 14             \\
  \end{tabular}
  \caption{\label{tab:champres}Results of the computations with \textsc{Champ}}
\end{table}
%

\begin{table}[htbp]
  \centering
  \begin{tabular}{c | l || c | l}
    \multirow{2}{*}{\(G_0\)} & Groups containing \(G_0\) as & \multirow{2}{*}{\(G_0\)} & Groups containing \(G_0\) as \\
    & largest reflection group & & largest reflection group\\
    \hline
    \hline
    \(\mu_6\T\) & \(\mu_6 \T\) &
    \(\mu_{12}\T\) & \(\mu_{12} \T\) \\
    \hline
    \(\mu_4\O\) & \(\mu_4 \O\) &
    \(\mu_8\O\) & \(\mu_d \O\), \(d \in \{8, 16\}\)\\
    \(\mu_{12}\O\) & \(\mu_{12} \O\) &
    \(\mu_{24}\O\) & \(\mu_d \O\), \(d \in \{24, 48, 72, 96\}\)\\
    \hline
    \(\mu_4\I\) & \(\mu_4 \I\) &
    \(\mu_6\I\) & \(\mu_6 \I\) \\
    \(\mu_{10}\I\) & \(\mu_{10} \I\) &
    \(\mu_{12}\I\) & \(\mu_d \I\), \(d \in \{12, 24, 36, 48, 72, 84,\)\\
    \(\mu_{20}\I\) & \(\mu_d \I\), \(d \in \{20, 40, 80, 100, 140\}\) & & \phantom{\(\mu_d\I\), \(d\in\{\)}\(96, 108, 132, 144\}\)\\
    \(\mu_{30}\I\) & \(\mu_d \I\), \(d \in \{30, 90, 150\}\) &
    \(\mu_{60}\I\) & \(\mu_d \I\), \(d \in \{60, 120, 180, 240\}\)\\
    \hline
    \(\OT_2\) & \(\OT_2\) &
    \(\OT_4\) & \(\OT_4\) \\
    \(\OT_6\) & \(\OT_6\) &
    \(\OT_{12}\) & \(\OT_{12}\) \\
  \end{tabular}
  \caption{\label{tab:open2}Groups for which there is no answer yet}
\end{table}

\clearpage
\printbibliography
\end{document}